\theoremstyle{theorem}
\newtheorem{thm}{Theorem}[section]
\newtheorem{prop}[thm]{Proposition}
\newtheorem{cor}[thm]{Corollary}
\newtheorem{quest}[thm]{Question}
\theoremstyle{remark}
\newtheorem{defn}[thm]{Definition}
\newtheorem{rmk}[thm]{Remark}
\newtheorem{exa}[thm]{Example}
\newtheorem{exe}[thm]{Exercise}
\newcommand{\C}{\mathbb{C}}
\newcommand{\Z}{\mathbb{Z}}
\newcommand{\F}{\mathbb{F}}
\newcommand{\CP}{\mathbb{CP}}
\renewcommand{\P}{\mathbb{P}}
\newcommand{\CPbar}{\overline{\CP}{\vphantom\CP}^2}
\renewcommand{\epsilon}{\varepsilon}
\newcommand{\co}{\colon}
\newcommand{\inv}{^{-1}}
\newcommand{\del}{\partial}
\newcommand\sd{\mkern1.5mu{:}\mkern1.5mu}
\newcommand{\mfig}[1]{\includegraphics[width=0.5\textwidth]{figures/#1}}
\newcommand{\mlfig}[1]{\includegraphics[width=0.6\textwidth]{figures/#1}}
\newcommand{\bfig}[1]{\includegraphics[width=0.9\textwidth]{figures/#1}}
\DeclareMathOperator{\Sing}{Sing}
\DeclareMathOperator{\Cone}{Cone}
\DeclareMathOperator{\id}{id}
\DeclareMathOperator{\lk}{lk}
\DeclareMathOperator{\Aut}{Aut}
\title{Surfaces in 4-manifolds and complex curves}
\author{Marco Golla}
\begin{document}

\maketitle

\begin{abstract}
These are lecture notes from a mini-course taught at Winterbraids XIII (Montpellier, 2024). The main character of these notes are curves in the complex projective plane, $\CP^2$, viewed from a topological perspective.
\end{abstract}

\section{Introduction}

Plane curves are among the oldest objects in mathematics, which have been studied since the ancient Greeks. Complex curves have been discovered a lot more recently, but they have been studied at least since Riemann's work on surfaces.

In these notes, I will discuss curves in the complex projective planes and their singularities from the viewpoint of a low-dimensional topologist. This perspective dates back to the work of Zariski and van Kampen on fundamental groups~\cite{Zariski,vanKampen} and to the Italian school of algebraic geometry~\cite{Chisini}. More recent contributions are due to Libgober~\cite{LibgoberA, Libgober-homotopy}, Moishezon~\cite{Moishezon-noniso}, Teicher~\cite{MoishezonTeicher-wiringI, MoishezonTeicher-wiringII}, Artal Bartolo, Carmona Ruber, Cogolludo Agust\'in~\cite{ArtalCarmonaCogolludo, ACI}, Florens~\cite{CogolludoFlorens}, and many more.

Here, a \emph{curve} will be the zero-set $C$ of a homogeneous (and \emph{reduced}, or square-free) polynomial in three variables, viewed as a subset of the complex projective plane, $\CP^2$. We will be interested in the topology of $C$, of $\CP^2\setminus C$, and of the pair $(\CP^2,C)$. There are mainly three questions I am interested in, and the aim of these notes is to give a brief introduction to all three of them.

\begin{quest}[The geography problem]
Which \emph{combinatorial types} of curves can be realised as complex curves?
\end{quest}

Here by combinatorial type we essentially mean the discrete data that a curve has: number of components, types of singularities, perhaps the incidence relations. This will be made more explicit in the next section.

Geography problems are very common in algebraic geometry: for instance, what are the possible points in the $\Z^2$-plane realised as $(c_1^2(S),c_2(S))$ for some compact complex surface? Or even, what are the possible Hodge numbers of non-singular complex algebraic varieties? The question stated above is a relative version of these geography problems.

\begin{quest}[The botany problem]
Given a combinatorial type, in how many \emph{different} ways can it be realised?
\end{quest}

Here the key to the question is defining what we mean by ``different'', which we will do in the next section. There are several equivalence relations one can look at, and we will focus on a very topological one in these notes. The keyword here is \emph{Zariski pairs}, which are pairs of objects that have the same combinatorial type but different topology (whatever this means, for the moment).

\begin{quest}[Algebraic geometry restricts topology]
What is the difference between the set of topological objects that look algebro-geometric and the set of algebro-geometric objects?
\end{quest}

This question is extremely vague, but let me try to give an analogy first.

Every non-singular complex projective surfaces is closed, smooth, oriented 4-manifold (see, for instance,~\cite[Theorem 1.2.33]{GompfStipsicz} for a reference).
What can we say about their topology?
For instance, every finitely-presented group is the fundamental group of a closed, smooth, oriented 4-manifolds.
Can non-singular complex projective surfaces also have arbitrary (finitely presented) fundamental groups?
The answer is no, for instance because the first Betti number of a K\"ahler manifold (of which a complex projective surfaces is a special case) is even (see~\cite[Section I.13]{BHPV}): for example, $\Z$ cannot be the fundamental group of a K\"ahler surface\footnote{Note that, however, $\Z$ is the fundamental group of a compact complex surface! The Hopf surface $\C^2\setminus \{0\}/z \sim 2z$ is diffeomorphic to $S^1\times S^3$.}.
In this sense, being an algebro-geometric object (a complex projective surface) restricts the underlying topology (the fundamental group of the underlying 4-manifold).

Back to curves, we can ask which groups appear as fundamental groups of complements of a curve in $\CP^2$, and ask whether, for every $G$ which is the fundamental group of the complement of a surface $S \subset \CP^2$ with conical singularity, there also exist a complex curve whose complement has fundamental group $G$.
We will not discuss this question in detail, but only touch upon it on occasions.

The main results we prove here are three, all classical. The subdivision of topics roughly reflects the subdivision of the mini-course into three lectures.

\begin{itemize}
\item We compute of the homology of $\CP^2\setminus C$, which turns out to be a rather coarse invariant of $C$ (Theorem~\ref{t:homology}). We also discuss some results about the fundamental group of $\CP^2\setminus C$, without going into a lot of details.

\item We determine the homeomorphism type of $C$, by proving the adjunction formula (Theorem~\ref{t:adjunction}). We also give a singular version of the same formula (Theorem~\ref{t:singularadjunction}). This is also a good excuse to introduce branched covers between Riemann surfaces and Milnor fibres of plane curve singularities.

\item We exclude some configurations of singularities from appearing on a curve of some degree, using branched covers of Riemann surfaces and of 4-manifolds. These are rather flexible proofs: some of them apply to pseudo-holomorphic curves (which we will not discuss in detail), while others apply to more general (real) surfaces with conical singularities.
\end{itemize}

\paragraph{Organisation} In Section~\ref{s:curves} we define curves and their singularities more formally, we state some fundamental properties they have, and we formulate more properly the botany and geography questions. In Section~\ref{s:complements} we look at the algebraic topology of curve complements, focusing on the homology and the fundamental group. In Section~\ref{s:adjunction} we show the degree-genus formula, which expresses the genus of a non-singular complex curves in terms of the degree, and we give some proofs of the adjunction formula from which this computation follows. In Section~\ref{s:singularadjunction} we define Milnor numbers of singularities and prove a singular version of the adjunction formula. In Section~\ref{s:blow-ups} we discuss blow-ups and their interactions with curves. In Section~\ref{s:Fano} we use blow-ups and branched covers of 4-manifolds to obstruct the existence of the Fano configuration. Finally, in Section~\ref{s:RiemannHurwitz} we go back to branched covers of Riemann surfaces and use them to obstruct the existence of certain configurations on singularities on curves, mostly through examples.

\paragraph{Acknoweldgements} I would like to thank the organisers of the conference Winterbraids XIII for giving me the opportunity to teach a mini-course on this topic and to write these lecture notes. I would also like to thank the participants who motivated me with good questions and comments during the conference. Finally, I would like to thank Paul Brisson, Peter Feller, and Vincent Florens for many relevant conversations.


\section{Complex curves and surfaces with conical singularities}\label{s:curves}

We begin by recalling some basic definitions about projective spaces and projective curves.

\begin{defn}
The \emph{complex projective $n$-space} is the manifold $(\C^{n+1}\setminus\{0\})/\C^*$, where $\C^*$ acts diagonally by multiplication on $\C^{n+1}$: $\lambda\cdot(z_0,\dots,z_n) = (\lambda z_0, \dots, \lambda z_n)$.
\end{defn}

What is not obvious from the definition is that $\CP^n$ is a manifold. In fact, it is even a non-singular complex variety, as one can check in each chart of the form $\{z_k \neq 0\}$ by constructing explicit slices for the action and computing the transition maps.

A point in $\CP^n$ is thus an the equivalence class, and we denote the point corresponding to $(z_0, \dots, z_n) \in \C^{n+1}$ by $(z_0\sd\dots \sd z_n)$. When $n = 1$ we have the complex projective line, also called the \emph{Riemann sphere}, which is diffeomorphic to the 2-sphere $S^2$. When $n=2$ we have the complex projective plane. In these cases we use $(x\sd y)$ and $(x\sd y \sd z)$ as coordinates, to make the notation lighter.

Recall that $H^*(\CP^n)$, endowed with the cup product, is isomorphic as a ring to $\Z[h]/(h^{n+1})$: an explicit isomorphism sends $h$ to the hyperplane class (i.e. is Poincar\'e dual to the homology class of a hyperplane, which we still denote with $h$), and $h^n$ is the orientation class (induced by the complex structure). This is obvious for $n = 1$, once we know that $\CP^1$ is diffeomorphic to $S^2$. We will prove this below for the case $n = 2$.

\begin{defn}
A \emph{(reduced, plane, projective, complex) curve} $C = \{f = 0\}$ is the zero-set of a homogeneous and square-free three-variable polynomial $f \in \C[x,y,z]$. The \emph{degree} $\deg C$ of $C$ is defined as the degree of $f$. $C$ is called \emph{irreducible} if $f$ is irreducible. If $f = f_1\cdots f_c$ is a product of irreducible factors, then each curve $\{f_k = 0\}$ is called an \emph{irreducible component} of $C$.
\end{defn}

We will systematically drop most adjectives and simply talk about \emph{curves}, implicitly assuming that all curves we talk about are plane, projective, reduced, and defined over the complex numbers.

Irreducibility in algebraic geometry is the analogue of connectedness in topology, so we can think of the irreducible components of $C$ as its connected components. However, note that, as a topological space, a reducible curve is always connected (a fact which we will see shortly).

\begin{defn}
A point $p$ on a curve $C = \{f = 0\}$ is called \emph{singular} if $\nabla f(p) = 0$, and \emph{non-singular} otherwise. We denote with $\Sing(C)$ the set of singular points of $C$.
\end{defn}

By the implicit function theorem, locally around each of its non-singular points, $C$ looks like $\{y=0\} \subset \C^2$, meaning that there are a neighbourhood $U$ of $p$, a neighbourhood $V$ of the origin in $\C^2$, and a biholomorphism $\phi\co U \to V$ that induces a biholomorphism of triples $(U,C \cap U,p) \cong (V, \{y = 0\} \cap V, 0)$. Singular points are more complicated, as we will see below.

\begin{thm}\label{t:holomorphicimage}
Every irreducible curve is the image of a holomorphic map $u\co\Sigma \to \CP^2$, for some connected Riemann surface $(\Sigma, j)$. This map is unique up to biholomorphisms of $\Sigma$ if we require that it is somewhere injective.
\end{thm}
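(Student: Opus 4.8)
The plan is to realise $\Sigma$ as the \emph{normalisation} of $C$: first build it by hand out of the smooth locus and local Puiseux disks, then deduce uniqueness from a removable-singularities argument. I will write $C=\{f=0\}$ with $f$ irreducible and let $C^*=C\setminus\Sing(C)$; note $\Sing(C)$ is finite.

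\textbf{Existence.} By the implicit function theorem $C^*$ is a Riemann surface, and it is connected because $f$ is irreducible (the zero locus of an irreducible polynomial is an irreducible complex variety, and the smooth locus of an irreducible complex variety is connected). Around a singular point $p$, after an affine change of coordinates, $C$ is cut out near $p$ by a convergent power series which, by the Weierstrass preparation theorem, we may take to be a Weierstrass polynomial in one variable; since $f$ is square-free it factors there into finitely many distinct irreducible Weierstrass polynomials, the \emph{local branches} at $p$. By the Newton--Puiseux theorem each such branch is the image of an injective holomorphic map $\delta\co\Delta\to\C^2$ from a disk, restricting to a biholomorphism $\Delta\setminus\{0\}\to(\text{branch}\setminus\{p\})$. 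I would then form $\Sigma$ by gluing to $C^*$ one copy of $\Delta$ for each branch at each singular point, along the punctured disks via these Puiseux parametrisations. One checks that $\Sigma$ is a Hausdorff Riemann surface (distinct branches are separated, the gluings are biholomorphisms), connected (each glued disk meets the connected set $C^*$), and compact (using that $C\subset\CP^2$ is compact and that the closed Puiseux disks cover neighbourhoods of $\Sing(C)$). The inclusion $C^*\hookrightarrow\CP^2$ and the maps $\Delta\to\C^2\subset\CP^2$ agree on overlaps, hence assemble into a holomorphic $u\co\Sigma\to\CP^2$ with image $C$, which is somewhere injective since it is the inclusion on the dense open set $C^*\subset\Sigma$. (Alternatively, one can quote that the reduced irreducible variety $C$ admits a normalisation $\nu\co\tilde C\to C$ with $\tilde C$ a smooth projective curve, hence a compact Riemann surface, and $\nu$ finite and birational, and set $u=\iota\circ\nu$ with $\iota\co C\hookrightarrow\CP^2$.)

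\textbf{Uniqueness.} Let $u_i\co\Sigma_i\to\CP^2$, $i=1,2$, be somewhere injective holomorphic maps with image $C$. Since $\Sigma_i$ is compact, $u_i$ is proper onto $C$ and finite-to-one, so $u_i\inv(\Sing C)$ is finite; set $\Sigma_i^*=u_i\inv(C^*)$, which is $\Sigma_i$ minus finitely many points. Then $u_i\co\Sigma_i^*\to C^*$ is a proper holomorphic map between Riemann surfaces, hence a branched covering, of degree equal to the size of a generic fibre; somewhere injectivity forces this degree to be $1$, so $u_i\co\Sigma_i^*\to C^*$ is a biholomorphism. Composing, $\psi^*=u_2\inv\circ u_1\co\Sigma_1^*\to\Sigma_2^*$ is a biholomorphism with $u_2\circ\psi^*=u_1$. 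Near a deleted point $q\in\Sigma_1$, the map $\psi^*$ carries a punctured neighbourhood of $q$ into a coordinate chart around the point $q'\in\Sigma_2$ with $u_2(q')=u_1(q)$, hence is bounded and extends holomorphically across $q$ by the removable singularity theorem; the same applies to $(\psi^*)\inv$. So $\psi^*$ extends to a biholomorphism $\psi\co\Sigma_1\to\Sigma_2$ with $u_2\circ\psi=u_1$ by continuity.

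\textbf{Main obstacle.} The technical heart is the local analysis at the singular points --- proving via Newton--Puiseux that each local branch is the injective holomorphic image of a disk and that the gluing yields a compact Riemann surface. For uniqueness the delicate steps are that a somewhere injective proper holomorphic map restricts to a degree-one (hence biholomorphic) covering over the smooth locus, and the removable-singularities extension across the finitely many exceptional points.
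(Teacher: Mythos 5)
Your proof is correct, but it takes a genuinely different route from the one sketched in the notes. The notes obtain $\Sigma$ by \emph{embedded resolution}: one blows up $\CP^2$ repeatedly until the strict transform $\tilde C$ is a non-singular curve in some surface $X$, and then sets $\Sigma = \tilde C$ and $u = \pi|_{\tilde C}$; this presupposes resolution of plane curve singularities by blow-ups (developed later, in Section~\ref{s:blow-ups}) and has the virtue of producing $\Sigma$ as an \emph{embedded} curve in a blown-up surface, a model that is reused throughout the notes. You instead build the normalisation abstractly, gluing Puiseux disks for the local branches onto the smooth locus $C^*$; this is more self-contained (only the one-variable Weierstrass/Newton--Puiseux theory is needed, not resolution of singularities), and, importantly, you actually address the uniqueness clause, which the notes' sketch omits entirely --- your argument via the degree of the induced branched cover $\Sigma_i^*\to C^*$ plus the Riemann removable singularity theorem is the standard and correct one. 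Two small points to tighten: the connectedness of the smooth locus of an irreducible plane curve is itself a non-trivial input (it is essentially equivalent to the local irreducibility statement you are using, and is usually proved via the same Puiseux analysis or via monodromy of a generic projection), so it deserves a citation rather than a parenthesis; and in the uniqueness step, ``somewhere injective'' should be taken to mean injective \emph{with non-vanishing differential} at some point (or injective over a regular value), since a one-point fibre sitting over a branch value does not by itself force the covering degree to be $1$ (compare $z\mapsto z^2$ at the origin).
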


\begin{proof}[Sketch of proof]
Every curve singularity can be \emph{resolved} by \emph{blowing up} (more on this in Section~\ref{s:Fano}). This means that, given $C$, there are a complex surface $X$, a map $\pi$, and a non-singular, irreducible curve $\tilde C \subset X$ such that the restriction of $\pi$ to $X\setminus \pi\inv(\Sing(C))$ a biholomorphism onto $\CP^2 \setminus \Sing(C)$ and the restriction of $\pi$ to $\tilde C$ one-to-one away from $\pi\inv(\Sing(C))$.

Letting $\Sigma = \tilde C$ and $u = \pi|_\Sigma$ we obtain the statement.
\end{proof}

\begin{thm}[{\cite[Theorem~2.10]{Milnor-singularities}}]
Let $C$ be a curve and $p \in C$. Then there exists a link $L = L_{(C,p)} \subset S^3$ and a closed neighbourhood $U$ of $p$ such that $(U,C\cap U, p)$ is \emph{homeomorphic} to $(\Cone(S^3), \Cone(L), {\rm vertex})$.
\end{thm}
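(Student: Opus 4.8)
The statement is a special case of Milnor's local conical structure theorem, and the plan is to follow the classical argument. First I would pass to an affine chart around $p$, so that $p$ becomes the origin $0 \in \C^2$ and $C = \{f = 0\}$ for a holomorphic function $f$ defined near $0$. Since $C$ is reduced, its singular points are isolated (the singular locus is cut out by $f$ together with its partial derivatives, which share no common factor with $f$), so after shrinking we may assume $\Sing(C) \cap \overline{B} = \{0\}$ for a small ball $B$ around $0$; in particular $C^* := C \setminus \{0\}$ is a smooth real surface inside $B \setminus \{0\}$. Let $r(z) = \|z\|^2$ denote the squared distance from the origin.

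The technical heart of the argument is the following transversality lemma: there is $\epsilon_0 > 0$ such that $r|_{C^*}$ has no critical point in the punctured ball $B_{\epsilon_0} \setminus \{0\}$; equivalently, the sphere $S_\delta = \{\|z\| = \delta\}$ meets $C$ transversally for every $0 < \delta \le \epsilon_0$. I would prove this by contradiction using the curve selection lemma (see~\cite{Milnor-singularities}): if arbitrarily small critical points existed, one would obtain a non-constant real-analytic path $\gamma \co [0,\delta) \to \C^2$ with $\gamma(0) = 0$ and $\gamma(t)$ a critical point of $r|_{C^*}$ for $t > 0$. At such a point the ambient gradient $\nabla r = 2z$ is normal to $C$, while $\gamma'(t)$ is tangent to $C$; hence $(r \circ \gamma)'(t) = \langle \nabla r(\gamma(t)), \gamma'(t)\rangle = 0$, so $r \circ \gamma$ is constant, equal to $r(\gamma(0)) = 0$. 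This is absurd, since $\gamma(t) \neq 0$, and hence $r(\gamma(t)) > 0$, for small $t > 0$. In particular $L := C \cap S_{\epsilon_0}$ is a closed $1$-submanifold of $S_{\epsilon_0} \cong S^3$, i.e. a link.

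Next I would build a vector field that radially inflates the ball while respecting $C$. On $C^*$, the intrinsic gradient $\operatorname{grad}_{C^*} r$ (with respect to the metric induced from $\C^2 = \R^4$) is non-vanishing on $B_{\epsilon_0} \setminus \{0\}$ by the transversality lemma, and satisfies $\langle \operatorname{grad}_{C^*} r(z), z \rangle = \tfrac12 \|\operatorname{grad}_{C^*} r(z)\|^2 > 0$, since $\operatorname{grad}_{C^*} r$ is the tangential part of $\nabla r = 2z$. I would extend it to a vector field defined near $C^*$ keeping this positivity, and glue it to the radial field $u(z) = z$ away from $C$ via a partition of unity; since $\langle \cdot\,, z\rangle > 0$ is a convex condition and tangency to $C$ is preserved provided one only uses fields tangent to $C$ at points of $C$, the result is a smooth vector field $v$ on $B_{\epsilon_0} \setminus \{0\}$ that is tangent to $C^*$, nowhere zero, and has $\langle v(z), z \rangle > 0$. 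Rescaling $v$ so that $\langle v(z), z\rangle = r(z)$, its flow lines satisfy $r(\phi_t(z)) = r(z)\,e^{2t}$, so each flow line limits to $0$ as $t \to -\infty$ and meets $S_{\epsilon_0}$ exactly once in forward time.

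Finally, flowing $S_{\epsilon_0}$ inward along $v$ and sending the limit to the cone vertex produces a homeomorphism of triples $(\overline{B}_{\epsilon_0}, C \cap \overline{B}_{\epsilon_0}, 0) \cong (\Cone(S_{\epsilon_0}), \Cone(L), {\rm vertex})$ which restricts to the identity on $S_{\epsilon_0}$; continuity at the vertex follows from the fact that all trajectories converge to $0$ as $r \to 0$, and the image of $C$ is the subcone on $L$ precisely because $v$ is tangent to $C^*$. Transporting back to the original chart and identifying $(S_{\epsilon_0}, L)$ with $(S^3, L)$ yields the claim, with $U$ the closed $\epsilon_0$-ball around $p$. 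I expect the curve selection lemma — and hence the transversality lemma — to be the only genuinely substantial step; the vector-field and flow construction is a standard ``Morse theory without critical points'' argument, and the partition-of-unity gluing together with the verification of continuity at the vertex are routine.
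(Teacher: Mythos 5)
The paper does not prove this statement: it is quoted with a citation to Theorem~2.10 of Milnor's book on singular points of complex hypersurfaces, so there is no in-paper argument to compare against. Your proposal is a correct and essentially complete reconstruction of Milnor's own proof: isolatedness of the singular points of a reduced curve, the curve selection lemma to show that the squared-distance function has no critical points on $C\setminus\{p\}$ near $p$ (hence small spheres meet $C$ transversally), and a radial vector field tangent to $C$ whose flow exhibits the cone structure. The only points requiring care --- that the partition of unity must be subordinate to a cover in which the non-tangent radial field $z$ is supported away from $C$, and that the critical locus of $r|_{C^*}$ is a semi-analytic set accumulating at the origin so that the curve selection lemma applies --- are both correctly flagged in your write-up.
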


\begin{defn}
The link $L_{(C,p)}$ in the theorem above is called the \emph{link} of the singularity of $C$ at $p$.
\end{defn}

Note that the link of a singularity is more classically defined as the intersection of $C$ with a sphere of small radius centred at $p$, a definition which goes back at least to Brauner~\cite{Brauner} and Brieskorn~\cite{Brieskorn}, generalised to higher dimensions and codimensions. Motivated by the theorem above, we give a definition that generalises complex curves by viewing them as topological objects.

\begin{defn}
We say that two complex curve singularities are \emph{(topologically) equivalent} if their links are diffeomorphic.
\end{defn}

\begin{rmk}
Note that the concept of topological equivalence for singularities is much looser than that of diffeomorphism or biholomorphism. For instance, consider the family of singularities of $\{xy(x-y)(x-\lambda y) = 0\} \subset \C^2$ at the origin: topologically, these are all equivalent, since their link is always the union of four fibres of the Hopf fibration on $S^3$. However, the cross ratio gives an obstruction for two singularities in these family to be biholomorphic (why?). Similar examples can be concocted of non-diffeomorphic, but topologically equivalent, singularities.
\end{rmk}

\begin{exa}
Each of these examples can be taken as an exercise.
\begin{itemize}
\item If $p$ is a non-singular point of $C$, then the link $L_{(C,p)}$ is the unknot. The converse is also true, as we will mention below.
\item If $p < d$ and $C = \{x^p z^{d-p} - y^d = 0\}$, then the link of $C$ at the point $(0\sd 0\sd1)$ is the torus link $T(p,d)$. In particular, if $p = 2$ and $d=3$ the link is a trefoil knot.
\end{itemize}
\end{exa}

\begin{defn}
A \emph{surface with conical singularities} in a $4$-manifold $X$ is a compact subspace $F$ of $X$ that at each point is locally homeomorphic to the cone over a link. The \emph{singularities} or \emph{singular points} of $F$ are the points where the link is not the unknot. A singular point of $F$ is called a \emph{cusp} or \emph{cuspidal} if the link has one component.
\end{defn}

Note that the definition automatically implies that each such $F$ has finitely many singularities.

\begin{rmk}
We can make the definition more restrictive in at least two ways. For once, if $X$ is smooth we could require the curve to be \emph{smoothly} embedded away from its singular points (which in turn is equivalent to asking that the surface is isotopic to a PL-immersed surface with respect to some triangulation of $X$). If $X$ supports a symplectic form $\omega$, we can require that $F$ is $J$-holomorphic for some almost-complex structure compatible with $\omega$. We will not use these variants in these notes.
\end{rmk}

We want to define some equivalence relations on curves.

\begin{defn}
Two irreducible curves are \emph{(topologically) equisingular} if the collection of their singularities are the same. We are going to drop the adverb ``topologically'' throughout.

Two curves $C$, $C'$ are:
\begin{itemize}\itemsep 0pt
\item \emph{weakly combinatorially equivalent} if they have components which are equisingular and if they have the same collections of singularities;
\item \emph{strongly combinatorially equivalent} if there is an equisingular bijection of the components of $C$ and $C'$ that preserves the degrees of the components and the incidence relations;
\item \emph{topologically equivalent} if there is a homeomorphism of pairs $(\CP^2,C) \cong (\CP^2,C')$;
\item \emph{biholomorphic} if there is a projective transformation of $\CP^2$ mapping $C$ to $C'$.
\end{itemize}

The \emph{weak/strong combinatorial type} of a curve is its weak/strong combinatorial equivalence class (or, which amounts to the same, the data of its singularities and, in the case of strong equivalence, the incidence relations). The \emph{topological type} of a curve is the homeomorphism class of the corresponding pair.
\end{defn}

The four equivalence relations above are clearly in order from weaker to stronger, and finding examples of curves satisfying one but not the next is not easy in all cases. The strong combinatorial equivalence is a bit loosely defined (and we will not really use it), so let us give an example of what it means, in the special case of \emph{line arrangements}.

\begin{defn}
A \emph{line arrangement} is a complex curve whose irreducible components are lines, i.e. they have degree $1$.
\end{defn}

Line arrangements can only have non-singular components (since lines cannot be singular) and every two lines intersect transversely once.
The only singularities it can have are transverse multiple points, which are topologically equivalent to $\{x^m = y^m\} \subset \C^2$, whose link is the union of $m$ fibres of the Hopf fibration of $S^3$.
The quantity $m$ is called the \emph{multiplicity} of the singularity. In this case, the weak combinatorial type of the arrangement is encoded by the sequence $(t_2, t_3, \dots)$, where $t_m$ is the number of points in the arrangement of multiplicity $m$.
On the other hand, two line arrangements $L = L_1 \cup \dots \cup L_d$ and $L' = L'_1 \cup \dots \cup L'_d$ are strongly combinatorially equivalent if there exists a permutation $\sigma$ of $\{1,\dots,d\}$ such that whenever $L_{i_1}, \dots, L_{i_m}$ meet at a point, then $L'_{\sigma(i_1)}, \dots, L'_{\sigma(i_m)}$ also meet at a point.

\begin{exa}
In Figure~\ref{f:samewc} we show an example of two weakly combinatorially equivalent line arrangements that are not strongly combinatorially equivalent. They are two arrangements of degree 6, with two triple points and nine double points: on the left, the two triple points are on the same line, on the right they are not. (Alternatively: on the left there is a line without triple points, on the right there is no such line.)
\end{exa}

\begin{figure}[h]
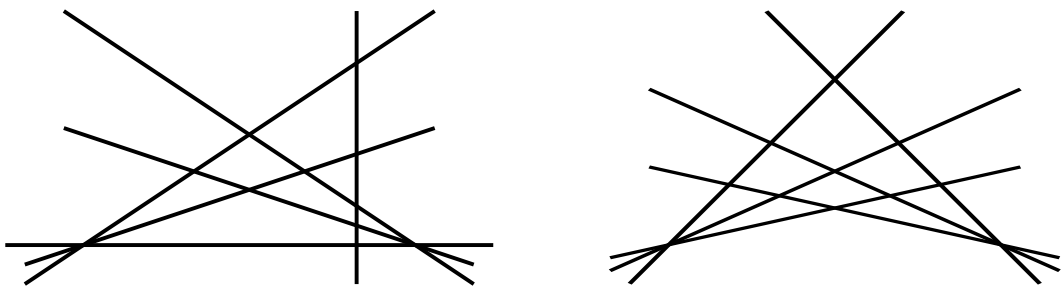

\centering
\bfig{same_weak}
\caption{Two arrangements that are weakly but not strongly combinatorially equivalent.}\label{f:samewc}
\end{figure}

We can now reformulate the three questions from the introduction in a more precise manner.

\begin{quest}[Geography]
Which weak/strong combinatorial types are realised by complex curves?
\end{quest}

\begin{quest}[Botany]
Given a weak/strong combinatorial type, how many topological types is it partitioned into? Into how many (moduli spaces of) biholomorphic equivalence classes?
\end{quest}

\begin{quest}[Algebraic geometry vs topology]
Which weak/strong combinatorial types are realised by surfaces with conical singularities or by pseudo-holomorphic curves, but not by complex curves?
\end{quest}


\section{Curve complements}\label{s:complements}

If two curves are biholomorphic or topologically equivalent, then in particular their \emph{complements} are going to be homeomorphic. Therefore, every homeomorphism (or homotopy equivalence) invariant of their complement is a topological invariant of the curve.

The goal of this section is to investigate the algebraic topology of these complements.

Let us start with a simple, and rather coarse invariant: the Euler characteristic. The Euler characteristic of the complement of a curve $C$ is easy to compute:
\[
\chi(\CP^2\setminus C) = \chi(\CP^2) - \chi(C) = 3- \chi(C).
\]
In turn, $\chi(C)$ only depends on the weak combinatorics of the curve. This is rather cumbersome to express in general, but  something can be said about line arrangements.

\begin{exe}
If $L$ is a line arrangement of degree $d$ with weak combinatorics $(t_2, t_3, \dots)$, then
\[
\chi(L) = 2d-\sum (m-1)t_m.
\]
\end{exe}

The topology of the complement is in fact somewhat simpler than one would expect in full generality.

\begin{thm}
If $C \subset \CP^2$ is a complex curve, then $\CP^2\setminus C$ has a handle decomposition with handles of indices at most $2$.
\end{thm}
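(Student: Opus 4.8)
The plan is to identify $\CP^2\setminus C$ with a smooth complex affine variety and then to build on it a proper Morse function whose critical points all have index at most $\dim_\C(\CP^2\setminus C)=2$; this is the classical Andreotti--Frankel philosophy, and the handle decomposition falls out of standard Morse theory.

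First I would make the complement affine. Write $C=\{f=0\}$ with $f$ homogeneous of degree $d=\deg C\ge 1$. The degree-$d$ monomials in $x,y,z$ define the $d$-fold Veronese embedding $v_d\co\CP^2\hookrightarrow\CP^N$ with $N=\binom{d+2}{2}-1$, a closed holomorphic embedding, and by construction $C=v_d\inv(H)$ where $H\subset\CP^N$ is the hyperplane cut out by the linear form corresponding to $f$. Hence $v_d$ restricts to a closed holomorphic embedding of $V:=\CP^2\setminus C$ into $\CP^N\setminus H\cong\C^N$. Since $V$ is an open subset of the manifold $\CP^2$ it is a smooth $4$-manifold regardless of the singularities of $C$, so this realises $V$ as a closed complex submanifold of $\C^N$ of complex dimension $2$.

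Next I would run Morse theory with the square of the distance to a generic point. For $p\in\C^N$ set $\phi_p\co V\to\R$, $\phi_p(z)=|z-p|^2$. As $V$ is closed in $\C^N$, each sublevel set $\phi_p\inv([0,R])$ is $V$ intersected with a closed ball, hence compact, so $\phi_p$ is proper and bounded below. Two standard facts then finish the proof. (i) For $p$ outside a measure-zero set — the critical values of the map $(q,v)\mapsto q+v$ from the normal bundle of $V$ in $\C^N$ — the function $\phi_p$ is Morse; this is the usual Sard/transversality argument used to produce Morse functions on submanifolds of Euclidean space. (ii) At each critical point $q$ of $\phi_p$ the Morse index is at most $2$: the function $|z-p|^2$ is strictly plurisubharmonic on $\C^N$ (its Levi form is the Euclidean metric), and this is inherited by the restriction to the complex submanifold $V$, since holomorphic discs in $V$ are holomorphic discs in $\C^N$. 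Thus the real Hessian $H$ of $\phi_p$ on $T_qV$ satisfies $H(v,v)+H(J_qv,J_qv)>0$ for all $v\ne0$, where $J_q$ is the complex structure; if $W\subset T_qV$ is a subspace on which $H$ is negative definite then $H$ is positive definite on $J_qW$, so $W\cap J_qW=0$ and $2\dim_\R W\le\dim_\R T_qV=4$, i.e.\ $\dim_\R W\le2$. Since the Morse index is the largest dimension of such a $W$, it is at most $2$. Fixing a generic $p$, standard Morse theory (as in Milnor's \emph{Morse Theory}) turns $\phi_p$ into a handle decomposition of $\CP^2\setminus C$ in which the handle attached at a critical point of index $k$ is a $k$-handle; by (ii) all handles have index at most $2$.

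The routine parts (the Veronese identity $C=v_d\inv(H)$, properness, the passage from Morse function to handles) are harmless; the real content is point (ii) — the plurisubharmonicity computation bounding the Morse index by the complex dimension — together with the genericity of the Morse condition in (i). One could shortcut the whole thing by quoting the Andreotti--Frankel theorem that a smooth complex affine variety of dimension $n$ admits a handle decomposition with handles of index $\le n$, but in lecture notes it seems worth spelling out the Morse function.
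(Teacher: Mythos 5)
Your proposal is correct and follows essentially the same route as the paper: realise $\CP^2\setminus C$ as a smooth affine variety via the Veronese embedding, then invoke the classical plurisubharmonicity argument (Andreotti--Frankel/Stein) bounding Morse indices by the complex dimension. The only difference is that you spell out the Morse-theoretic details that the paper delegates to its references, which is a fine expository choice.
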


\begin{proof}[Sketch of proof]
Using the Veronese embedding $\iota\co\CP^2 \hookrightarrow \CP^{{d+2 \choose 2}-1}$, one can express $C$ as the intersection of $\iota(\CP^2)$ with a hyperplane $H \subset \CP^{{d+2 \choose 2}-1}$. In particular, $\CP^2\setminus C$ is an affine non-singular variety, so it is a Stein manifold. Stein manifolds have handle decompositions with handles of index at most 2 (this is a classical fact about plurisubharmonic functions: for instance, see~\cite[Chapter~8]{OzbagciStipsicz} and~\cite[Section~7]{Milnor-Morse} for more details).
\end{proof}

\begin{rmk}
Libgober gave an explicit 2-dimensional cell complex onto which $\CP^2\setminus C$ retracts~\cite{Libgober-homotopy}, and very recently Sugawara gave an explicit handle decomposition~\cite{Sugawara}. Both are expressed in terms of braid monodromy.
\end{rmk}

\begin{thm}\label{t:homology}
If $C$ is a curve with components of degrees $d_1, \dots, d_c$, then
\[
H_1(\CP^2\setminus C) = \Z^{\oplus c}/(d_1, \dots, d_c)\Z.
\]
In particular, if $\gcd(d_1, \dots, d_c) = 1$, then $H_1(\CP^2\setminus C)$ is free of rank $c-1$. Moreover, $H_2(\CP^2\setminus C)$ is free and all higher homology groups vanish.
\end{thm}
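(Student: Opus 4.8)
The plan is to read off $H_1(\CP^2\setminus C)$ from the long exact sequence of the pair $(\CP^2,\CP^2\setminus C)$, identifying the relative groups with the cohomology of $C$ by a duality theorem, while the freeness of $H_2$ and the vanishing of the higher groups come essentially for free from the preceding theorem.

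First I would observe that, since $\CP^2\setminus C$ is homotopy equivalent to a CW complex of dimension at most $2$ (a handle decomposition with handles of index $\le 2$ yields such a complex), we get $H_i(\CP^2\setminus C)=0$ for $i\ge 3$, and $H_2(\CP^2\setminus C)$ — being a subgroup of the free abelian group of cellular $2$-chains — is free. So only $H_1$ is at stake. For that I would invoke Alexander--Lefschetz duality for the triangulable pair $C\subset\CP^2$, namely the isomorphism $H_k(\CP^2,\CP^2\setminus C)\cong H^{4-k}(C)$ under which the canonical map $H_k(\CP^2)\to H_k(\CP^2,\CP^2\setminus C)$ corresponds, via Poincar\'e duality on $\CP^2$, to the restriction $H^{4-k}(\CP^2)\to H^{4-k}(C)$. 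Since $H_1(\CP^2)=0$, the long exact sequence degenerates at the relevant spot to
\[
H_2(\CP^2)\xrightarrow{\ \rho\ }H^2(C)\longrightarrow H_1(\CP^2\setminus C)\longrightarrow 0,
\]
so $H_1(\CP^2\setminus C)\cong\coker\rho$, where $\rho$ is the restriction of the hyperplane class $h$.

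It then remains to compute $H^2(C)$ and the class $\rho(h)$. The curve $C$ is obtained from its normalisation $\nu\co\tilde C=\bigsqcup_{i=1}^{c}\tilde C_i\to C$ — where $\tilde C_i$ is the closed Riemann surface of Theorem~\ref{t:holomorphicimage} associated to the $i$-th component — by identifying, over each singular point, the finitely many points of $\tilde C$ lying above it; this is a purely $0$-dimensional modification (each analytic branch being topologically a disc), so it changes neither $H_2$ nor the torsion of $H_1$. Hence $H_2(C)\cong\bigoplus_{i}\Z\,[C_i]\cong\Z^{c}$ and $H_1(C)$ is free, so by universal coefficients $H^2(C)\cong\Z^{c}$ with basis $e_1,\dots,e_c$ dual to $[C_1],\dots,[C_c]$. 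Pairing $\rho(h)$ against $[C_i]$ gives $\langle h,\iota_*[C_i]\rangle$, the intersection number in $\CP^2$ of $C_i$ with a generic line, which is $d_i$ by B\'ezout's theorem. Therefore $\rho(h)=(d_1,\dots,d_c)$ and $H_1(\CP^2\setminus C)\cong\Z^{\oplus c}/(d_1,\dots,d_c)\Z$; if $\gcd(d_1,\dots,d_c)=1$ this vector is primitive, hence part of a basis, and the quotient is free of rank $c-1$.

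The main obstacle is not conceptual but a matter of quoting two standard facts with the right precision. The first is the naturality of Alexander--Lefschetz duality — that the connecting homomorphism out of $H_k(\CP^2)$ really is the restriction map under the duality identifications — which I would cite rather than reprove. The second is the homotopy model $C\simeq\tilde C/\!\sim$, i.e. the claim that the only topological effect of a plane-curve singularity on the abstract space $C$ is to glue together the disc-shaped branches passing through the point; this uses that an analytic branch, being the cone on a knot, is topologically a disc. With these in place the computation is immediate, and one can note in passing that the same exact sequence (using the injectivity of $\rho$) gives $H_2(\CP^2\setminus C)\cong H^1(C)$, again free, consistent with the first step.
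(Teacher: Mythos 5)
Your proof is correct and is essentially the paper's argument in dual form: the paper applies Poincar\'e--Lefschetz duality and excision to identify $H_k(\CP^2\setminus C)$ with $H^{4-k}(\CP^2,C)$ and then runs the long exact sequence of the pair $(\CP^2,C)$ in cohomology, which is the same exact sequence you obtain from $(\CP^2,\CP^2\setminus C)$ via Alexander--Lefschetz duality, and the key step --- that the hyperplane class restricts to $(d_1,\dots,d_c)$ in $H^2(C)\cong\Z^c$, with $H_*(C)$ computed from the normalisation --- is identical. The only cosmetic difference is that you deduce the vanishing of $H_i$ for $i\ge 3$ and the freeness of $H_2$ from the handle-decomposition theorem, whereas the paper reads both off the same duality computation (namely $H_2(\CP^2\setminus C)\cong H^2(\CP^2,C)\cong H^1(C)$, which is free), a route you also note in passing.
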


Before diving into the proof, let us take a short detour.
The following fact was mentioned earlier, but let us give a proper statement and prove it.

\begin{prop}\label{p:curvehomology}
$H_2(\CP^2)$ is generated by the homology class $h$ of a complex line, and the intersection form on it the unique positive definite unimodular quadratic form of rank $1$. A curve of degree $d$ represents the homology class $dh$.
\end{prop}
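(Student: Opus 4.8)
The plan is to establish the three claims in order: first the computation of $H_2(\CP^2)\cong\Z$, then the identification of the intersection form, and finally the statement about the class of a degree-$d$ curve.

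First I would compute the homology of $\CP^2$ using the standard CW structure with one cell in each even dimension $0$, $2$, $4$, coming from the filtration $\CP^0\subset\CP^1\subset\CP^2$ (each $\CP^k$ is obtained from $\CP^{k-1}$ by attaching a single $2k$-cell via the Hopf-type map $S^{2k-1}\to\CP^{k-1}$). Since there are no cells in odd dimensions, all boundary maps vanish and $H_0=H_2=H_4=\Z$ with the rest zero; in particular $H_2(\CP^2)$ is infinite cyclic. That a complex line $L\cong\CP^1$ gives a generator follows because $L$ is precisely the $2$-skeleton's cell (or: the inclusion $\CP^1\hookrightarrow\CP^2$ induces an isomorphism on $H_2$, as one sees from the cellular chain complex). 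For the intersection form: $H_2$ has rank $1$, the form is unimodular by Poincaré duality on the closed oriented $4$-manifold $\CP^2$, and it is positive (rather than negative) definite because two distinct complex lines meet in exactly one point, transversally and with local intersection sign $+1$ (complex intersections are always positive with respect to the complex orientation) — so $h\cdot h=+1$. Alternatively, this matches the ring structure $H^*(\CP^2)\cong\Z[h]/(h^3)$ quoted earlier, where $h^2$ is the positive generator of $H^4$.

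For the last claim, that a curve $C$ of degree $d$ represents $dh$, I would argue by reducing to the case of lines. Since $[C]=k\,h$ for some integer $k$ (as $H_2\cong\Z h$), it suffices to compute $k$ by intersecting with a generic line $L$: we get $k = [C]\cdot[L] = [C]\cdot h$. Choosing $L=\{\ell=0\}$ generic, the intersection $C\cap L$ is the zero set of the restriction of $f$ to the line $L\cong\CP^1$, which is a homogeneous polynomial of degree $d$ in two variables; for generic $\ell$ this polynomial has exactly $d$ simple roots, and each contributes $+1$ to the intersection number because both $C$ and $L$ are complex submanifolds meeting transversally near such a point (local positivity again). Hence $[C]\cdot h = d$, so $k=d$ and $[C]=dh$. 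A cleaner variant avoiding genericity discussions: the class $[C]$ depends only on the homotopy class of the section of $\mathcal{O}(d)$ cutting it out, equivalently $[C]$ is additive under products of polynomials, and a degree-$d$ curve is a deformation of a union of $d$ generic lines, each representing $h$, so $[C]=dh$.

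The main obstacle is making the positivity/transversality arguments clean rather than hand-wavy: one must be careful to invoke that complex submanifolds of a complex surface meeting transversally do so with local intersection number $+1$ (with respect to the complex orientations), and that a generic line meets $C$ transversally in $\deg C$ points. Both facts are standard (the first is a linear-algebra computation on the complex orientation of $T_p\CP^2 = T_pC\oplus T_pL$; the second follows from the fact that a nonzero degree-$d$ form in two variables has $d$ roots counted with multiplicity, together with a Sard/Bertini-type genericity statement for the line). Everything else is routine cellular homology and Poincaré duality.
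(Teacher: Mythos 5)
Your proposal is correct and follows essentially the same route as the paper: identify $H_2(\CP^2)\cong\Z$ with generator $h=[L]$, get positivity of the form from two lines meeting transversally in one point with complex orientations, and compute $[C]\cdot h=d$ by intersecting with a generic line and invoking local positivity. The only (immaterial) divergence is at the start: you compute $H_*(\CP^2)$ from the cell structure and read off the generator from the $2$-skeleton, whereas the paper uses the fibration $S^1\to S^5\to\CP^2$ plus Hurewicz and Poincar\'e duality, deducing that $h$ generates from $h\cdot h=1$ and unimodularity --- the paper itself notes your cellular computation as the more classical alternative in a footnote.
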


\begin{proof}
Recall that $\CP^2$ is a smooth, closed, oriented 4-dimensional manifold, so it satisfies Poincar\'e duality over the integers. Recall also that $\CP^2 = S^5/S^1$, and from the long exact sequence of the fibration $S^1 \to S^5 \to \CP^2$ we obtain that $\CP^2$ is simply-connected and that $\pi_2(\CP^2) = \Z$. From the Hurewicz theorem and Poincar\'e duality, $H_*(\CP^2)$ is supported in degrees $0$, $2$, and $4$, and it has rank $1$ in each of these degrees\footnote{An alternative proof, which is more classical and more elementary, is to give an explicit handle cell decomposition of $\CP^2$ consisting of only one 0-cell, one 2-cell, and one 4-cell, where the 4-cell is attached to $S^2$ along the Hopf map. This is the proof in Hatcher's book~\cite[Example~0.6]{Hatcher}}.

We want to identify the generator of $H_2(\CP^2)$. Take two distinct complex lines $L$ and $L'$. Since they are orientable, they each represent a homology class $[L], [L'] \in H_2(\CP^2)$. Since one can deform the equation of $L$ onto that of $L'$, $[L] = [L'] =: h$ ($h$ for hyperplane).

Now, $L$ and $L'$ are two distinct complex lines, so they intersect transversely and positively at one point. In particular, their intersection product is $+1$. It follows that the homology class $h$ is primitive (why?) so it generates $H_2(\CP^2)$.

Now consider a curve $C$ of degree $d$. Choose a generic line $L$, so that the intersection between $L$ and $C$ is transverse. (That such a line exists, for instance, follows from Sard's theorem.) By the fundamental theorem of algebra, $L$ and $C$ intersect $d$ times. Since both $C$ and $L$ are complex curves, all intersections are positive, so $[C]\cdot h = d$ and therefore $[C] = dh$.
\end{proof}

As a corollary, we get a weak form of B\'ezout's theorem.

\begin{exe}[Weak B\'ezout's theorem]
If two curves $C$ and $C'$, of degrees $d$ and $d'$ respectively, meet transversely, then they meet at $dd'$ points. \emph{Assuming that each irreducible curve is connected}, show that, as mentioned above, each curve is connected.
\end{exe}

Let us get back to the proof of Theorem~\ref{t:homology}

\begin{proof}[Proof of Theorem~\ref{t:homology}]
We want to compute $H_k(\CP^2\setminus C)$ for each $k$. Call $N$ a regular neighbourhood of $C$, so that $N$ retracts onto $C$ and $\CP^2\setminus \overline{N} \cong \CP^2\setminus C$. By Poincar\'e--Lefschetz duality, excision, and homotopy invariance we have:
\[
H_k(\CP^2 \setminus N) \cong H^{4-k}(\CP^2 \setminus N, \del N) \cong H^{4-k}(\CP^2, N) \cong H^{4-k}(\CP^2, C).
\]
Consider the long exact sequence of the pair $(\CP^2,C)$ in cohomology. Call $c$ the number of components of $C$. Note that since $C$ is an oriented surface with conical singularities, its first homology is free and its second homology is free of rank $c$ (exercise!). By the universal coefficients theorem, the same holds in cohomology. From the exercise above, we also know that $C$ is connected.
\[
0 \to H^0(\CP^2,C) \to H^0(\CP^2) \to H^0(C) \cong \Z,
\]
so $H^0(\CP^2,C) = 0$ and we can continue with the rest of the long exact sequence.
\[
0 \to H^1(\CP^2,C) \to H^1(\CP^2) = 0,
\]
so $H^1(\CP^2,C) = 0$. We now get to the core of the argument:
\[
0 \to H^1(C) \to H^2(\CP^2,C) \to H^2(\CP^2) \to H^2(C) \to H^3(\CP^2,C) \to H^3(\CP^2) = 0.
\]
From Proposition~\ref{p:curvehomology}, the map $H_2(C) \to H_2(\CP^2)$ sends the fundamental class of a component $C_i$ of $C$ of degree $d_i$ onto $d_i h$. Therefore the map on cohomology, which is the transpose to the map in homology since we have no torsion, sends the generator of $H^2(\CP^2)$ to the vector $(d_1, \dots, d_c)$ in the basis of $H^2(C)$ given by the fundamental classes (in cohomology, so Poincar\'e duals to points) of the components.

Two things follow: $H^2(\CP^2) \to H^2(C)$ is injective, so $H^1(C) \cong H^2(\CP^2,C)$, and $H^3(\CP^2,C)$ is equal to the cokernel of the map $\Z \ni 1\mapsto (d_1, \dots, d_c) \in \Z^c$, from which the statement follows.
\end{proof}

A lot more information is contained in the fundamental group of a curve. However, computing a presentation of these fundamental groups is fairly hard. This was first done by van Kampen~\cite{vanKampen} and their ideas lead to the concept of braid monodromy~\cite{MoishezonTeicher-wiringI, MoishezonTeicher-wiringII, CohenSuciu, Cogolludo-braidmonodromy}. The computation of any non-trivial fundamental groups go beyond the scope of these lecture notes, but I would like to state some results and present some interesting examples (due to Zariski).

\begin{thm}[Folklore, see~\cite{Cogolludo-braidmonodromy}]\label{t:cyclicpi1}
If $C$ is a non-singular curve of degree $d$, then $\pi_1(\CP^2\setminus C)$ is cyclic of order $d$.
\end{thm}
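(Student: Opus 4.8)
The plan is to prove this in two movements: first handle a specific non-singular curve of each degree $d$ by a direct geometric computation, and then upgrade to all non-singular curves of degree $d$ by an equisingularity/connectedness argument. For the first movement, the natural model is the Fermat curve $C_d = \{x^d + y^d + z^d = 0\}$, or alternatively $\{x^d = y^{d-1}z\}$; what one really wants is a curve that can be exhibited as the branch locus of a nice map, or whose complement admits an explicit fibration. The cleanest route I know goes through pencils: projecting from a point $p \notin C$ gives a map $\CP^2 \setminus \{p\} \to \CP^1$ whose fibres are the lines through $p$, and this restricts to a map $\CP^2 \setminus (C \cup \{p\}) \to \CP^1$. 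A generic line through $p$ meets $C$ transversely in $d$ points, so the generic fibre is $\CP^1$ minus $d$ points, which has free fundamental group of rank $d-1$; the singular fibres are the lines tangent to $C$ or through a singular point (here there are none of the latter). One then runs the Zariski–van Kampen method: $\pi_1(\CP^2 \setminus C)$ is the quotient of the free group $F_{d-1}$ on the generic fibre by the relations coming from the braid monodromy around the finitely many tangent lines, together with one global relation making the product of the standard generators trivial (since the base is $\CP^1$, not $\C$). For the Fermat curve the tangent lines and the monodromy can be computed explicitly, and the upshot is that all generators become identified to a single generator $\gamma$ of order dividing $d$; combined with the abelianisation $H_1(\CP^2 \setminus C) = \Z/d$ from Theorem~\ref{t:homology} (with $c = 1$), this forces $\pi_1 \cong \Z/d$.

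Alternatively, and perhaps more in the spirit of these notes, one can avoid braid monodromy for the model curve: take $C_d = \{x^d = yz^{d-1}\}$ (or any curve projectively equivalent to a smooth one admitting a $\Z/d$ action), realise a degree-$d$ cyclic cover $\CP^2 \dashrightarrow \CP^2$ branched over a line, and identify the complement of $C_d$ with an explicit quotient; the fundamental group then falls out of the covering space theory. In either approach the key input is that a non-singular plane curve of degree $d$ is connected (from the exercise after Proposition~\ref{p:curvehomology}, or because it is irreducible: a reducible curve always has intersection points among its components and hence singularities).

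For the second movement — passing from one smooth curve to all of them — I would use that the space of smooth curves of degree $d$ is connected. Indeed, smooth degree-$d$ curves correspond to points in the projective space $\P(\C[x,y,z]_d) = \CP^{\binom{d+2}{2}-1}$ whose defining polynomial has non-vanishing gradient on the zero set; the complement of this locus (the discriminant hypersurface) is a Zariski-open subset of a complex projective space, hence connected. Along a path in this parameter space the pairs $(\CP^2, C_t)$ form a locally trivial fibre bundle (Ehresmann, using that each $C_t$ is smooth so there are no collisions of singularities), so all the complements $\CP^2 \setminus C_t$ are homeomorphic, and in particular have isomorphic fundamental groups. Hence it suffices to compute $\pi_1$ for the one model curve handled above.

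The main obstacle is the first movement: the Zariski–van Kampen computation for the model curve, i.e. pinning down the braid monodromy around the tangent lines of the pencil and checking that the relations collapse $F_{d-1}$ down to $\Z/d$ rather than to something larger. This is exactly the kind of non-trivial monodromy calculation the notes explicitly declare to be out of scope, which is why the theorem is attributed to folklore with a reference; in a self-contained treatment I would prefer the cyclic-cover route, where the only real work is writing down the branched cover $\CP^2 \setminus C_d \to \CP^2 \setminus \{\text{line}\} \simeq \C^2$ explicitly and reading off that its group of deck transformations is $\Z/d$ and that the base is simply connected. The equisingular-deformation step, by contrast, is routine once one invokes Ehresmann's fibration theorem.
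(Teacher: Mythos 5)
Your route is genuinely different from the one in the notes, and its central step is left undone. The notes avoid braid monodromy and deformation entirely: they restrict the Hopf fibration $S^1 \to S^5 \to \CP^2$ over the complement to obtain a fibration $S^1 \to S^5\setminus L \to \CP^2\setminus C$, where $L$ is the link at the origin of the cone $\{F=0\}\subset\C^3$ over $C$. Because $C$ is non-singular, this cone singularity is isolated, so Milnor's fibration theorem gives a fibration $M \to S^5\setminus L \to S^1$ whose fibre $M$ is homotopy equivalent to a wedge of $2$-spheres, hence simply connected; the homotopy exact sequence then yields $\pi_1(S^5\setminus L)\cong\Z$, and the exact sequence of the restricted Hopf fibration exhibits $\pi_1(\CP^2\setminus C)$ as a quotient of $\Z$. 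The group is therefore abelian, and Theorem~\ref{t:homology} (with $c=1$) finishes the proof. This works uniformly for every non-singular curve of degree $d$, so no model curve and no Ehresmann step are needed.

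In your plan, by contrast, the entire content of the theorem is concentrated in the step you yourself flag as ``the main obstacle'': showing that the braid monodromy relations around the $d(d-1)$ tangent lines collapse the free group $F_{d-1}$ of the generic fibre onto an abelian group. As written this is a statement of intent rather than a proof; the surjection of $F_{d-1}$ onto $\pi_1(\CP^2\setminus C)$ together with $H_1(\CP^2\setminus C)=\Z/d\Z$ gives nothing until that computation is actually done. Two further points. First, the alternative model curves you propose, $\{x^d=y^{d-1}z\}$ and $\{x^d=yz^{d-1}\}$, are singular at a coordinate point for $d\ge 3$ (all three partial derivatives vanish there), so they cannot serve as the smooth representative --- the Fermat curve is fine, but then you are back to the monodromy computation. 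Second, the cyclic-cover sketch needs to be made precise; when one does so, the natural object is the fibration of $\C^3\setminus\{F=0\}$ over $\C^*$ given by $F$, with simply connected fibre $\{F=1\}$, which is exactly the notes' argument in affine clothing. Your second movement (connectedness of the space of smooth degree-$d$ curves plus Ehresmann) is correct and is essentially Theorem~\ref{t:allcurvesareiso} of the notes, but it only becomes useful once the first movement is carried out for at least one curve.
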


We present a proof from~\cite[Lemma~1.1]{Libgober-homotopyII}\footnote{I learnt about this proof from Khash's beautiful \href{https://mathoverflow.net/questions/464026/computing-pi-1-of-the-complement-of-a-non-singular-plane-curve}{MathOverflow answer}.}.

\begin{proof}
Suppose that $C = \{F = 0\}$, where $F$ is a homogeneous polynomial of degree $d$.
Since $H_1(\CP^2\setminus C)$ is isomorphic to $\Z/d\Z$, as seen above, it suffices to show that $\pi_1(\CP^2\setminus C)$ is Abelian. To see that, recall that there is a fibration $S^1 \to S^5\setminus L \to \CP^2\setminus C$ obtained by restricting the fibration $S^1\to S^5\to \CP^2$. $L$ is the link of the singularity of the hypersurface of $\C^3$ defined by $\{F = 0\}$ (this hypersurface is known as the cone over $C$, in the algebro-geometric sense).
Since $C$ is non-singular, this singularity is isolated.

Milnor's theory of hypersurface singularities tells us that there is a fibration $M \to S^5\setminus L \to S^1$ (the Milnor fibration) and that $M$ (the Milnor fibre) is homotopy equivalent to a wedge of $2$-spheres. In particular, $M$ is simply-connected. From the long exact sequence in homotopy induced by this fibration we get:
\[
1 = \pi_1(M) \to \pi_1(S^5\setminus L) \to \pi_1(S^1) \to \pi_0(M) = 1,
\]
so $\pi_1(S^5\setminus L)$ is infinite cyclic.

From the long exact sequence of the fibration $S^1 \to S^5\setminus L \to \CP^2\setminus C$ we now obtain:
\[
\Z = \pi_1(S^1) \to \pi_1(S^5\setminus L) \to \pi_1(\CP^2\setminus C) \to \pi_0(S^1) = 1,
\]
so $\pi_1(\CP^2\setminus C)$ is Abelian.
\end{proof}

There is a generalisation of Theorem~\ref{t:cyclicpi1} for \emph{nodal curves}. Recall that a nodal curve is a curve with only ordinary double points as singularities; that is, all its singularities are locally modelled on $\{x^2 = y^2\}$.

\begin{thm}[Fulton, Deligne]\label{t:nodalcurves}
Suppose that $C$ is a nodal curve. Then $\pi_1(\CP^2\setminus C)$ is Abelian.
\end{thm}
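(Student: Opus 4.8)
The plan is to compute $\pi_1(\CP^2\setminus C)$ by the Zariski--van Kampen method applied to a generic pencil of lines, and then to extract abelianity from the restricted form the relations take when $C$ is nodal. Pick a point $P\notin C$ in general position; the pencil of lines through $P$ realises $\CP^2$ (after blowing up $P$, if one wants an honest fibration) as fibred over $\CP^1$ with fibres projective lines, and a generic such line $\ell_0$ meets $C$ transversely in $d=\deg C$ points. By Zariski's Lefschetz-type theorem the inclusion $\ell_0\setminus(\ell_0\cap C)\hookrightarrow\CP^2\setminus C$ is surjective on $\pi_1$, so $\pi_1(\CP^2\setminus C)$ is generated by meridians $\gamma_1,\dots,\gamma_d$ of the points of $\ell_0\cap C$; the defining relations are $\gamma_1\cdots\gamma_d=1$ (because $\ell_0\cong\CP^1$) together with, for each line of the pencil that fails to be transverse to $C$, the relations $\gamma_i=\beta_*(\gamma_i)$ induced by the associated braid-monodromy element $\beta$.

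The heart of the matter is the list of these non-transverse (``special'') lines and the form of their braids. For $P$ in general position each special line is of exactly one of two kinds (so that, e.g., no line through $P$ passes through two singular points of $C$, is bitangent to $C$, is an inflectional tangent, or is tangent to a local branch at a node). Either the line is simply tangent to $C$ at one of its smooth points: then two of the $d$ strands undergo a half-twist, and the induced relation identifies, up to conjugation by a word in the $\gamma_i$, two meridians of one and the same irreducible component of $C$ (the tangent line touches a single smooth branch). Or the line passes through a single node of $C$, transverse to both local branches: then the two relevant strands perform a full twist, and the relation forces two conjugate meridians to commute. (A B\'ezout/Pl\"ucker count gives $d(d-1)-2\#\Sing(C)$ special lines of the first kind and $\#\Sing(C)$ of the second, but below only the qualitative shape matters.)

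It remains to show that a presentation of this type defines an abelian group --- at which point a comparison with Theorem~\ref{t:homology} identifies it with $\Z^{\oplus c}/(d_1,\dots,d_c)\Z$, where $d_1,\dots,d_c$ are the degrees of the $c$ components. Since each component $C_k$ is irreducible, the $d_k$ points of $\ell_0\cap C_k$ are permuted transitively by the monodromy of the covering $C_k\to\CP^1$ induced by the pencil; running around the tangency relations for $C_k$ therefore chains all $d_k$ of the corresponding meridians together into conjugates of a single meridian $\mu_k$ of $C_k$. Substituting back, the presentation collapses towards $\langle\mu_1,\dots,\mu_c \mid [\mu_k,\mu_l]=1,\ \mu_1^{d_1}\cdots\mu_c^{d_c}=1\rangle$: the commutator $[\mu_k,\mu_l]$ can be killed because any two components of a \emph{nodal} curve meet only at nodes --- and, by B\'ezout, they meet in $d_kd_l$ of them --- so a node relation links them; while $[\mu_k,\mu_k]=1$ is vacuous, so that an irreducible nodal curve simply has cyclic complement, extending Theorem~\ref{t:cyclicpi1}.

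The main obstacle is precisely the step I glossed over above: the half-twist and full-twist relations only identify or commute meridians \emph{after conjugating by words in the $\gamma_i$}, and promoting ``$\gamma=w\gamma'w^{-1}$'' to a genuine identification --- equivalently, showing that conjugation by one meridian of $C_k$ acts trivially on the other meridians once the remaining relations are imposed --- is where the real content of the Fulton--Deligne argument lies. Making this rigorous requires either a careful choice of pencil together with a global analysis of the braid monodromy of a nodal curve, or an induction on $d$; Fulton's original proof is in fact short and geometric in spirit rather than an explicit braid computation. The nodal hypothesis is essential: a single cusp already allows non-abelian fundamental groups --- for instance Zariski's sextic with six cusps lying on a conic.
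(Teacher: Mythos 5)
The paper states this theorem without proof (it is attributed to Fulton and Deligne and used as a black box), so there is no in-paper argument to compare against; what matters is whether your proposal stands on its own, and it does not. Your setup is accurate: the Zariski--van Kampen presentation with meridian generators $\gamma_1,\dots,\gamma_d$, the relation $\gamma_1\cdots\gamma_d=1$, half-twist relations at the $d(d-1)-2\#\Sing(C)$ simple tangents and full-twist relations at the nodes, and the observation that the monodromy of $C_k\to\CP^1$ acts transitively on $\ell_0\cap C_k$ are all correct. But the passage from this presentation to the abelian group $\langle\mu_1,\dots,\mu_c \mid [\mu_k,\mu_l]=1,\ \mu_1^{d_1}\cdots\mu_c^{d_c}=1\rangle$ is not a computation you have performed --- and, as you yourself flag in your last paragraph, it cannot be performed by the elementary substitutions you describe. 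The tangency relations only say $\gamma_i=w\gamma_jw^{-1}$ for uncontrolled words $w$, and the node relations only make specific \emph{conjugates} of meridians commute; ``chaining the meridians into conjugates of a single $\mu_k$'' and then ``killing $[\mu_k,\mu_l]$'' both presuppose that the conjugating words act trivially, which is essentially equivalent to the abelianity you are trying to prove. So the argument is circular exactly at its crux.

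This gap is not a technicality: it is the reason the statement, announced by Zariski in 1929, remained open for half a century. Zariski's own argument rested on Severi's claim that the variety of irreducible nodal curves of given degree and number of nodes is irreducible (which would let one degenerate to a union of generic lines, where the braid monodromy \emph{is} computable); Severi's proof was flawed, and the theorem was only established by Fulton (via the algebraic fundamental group in positive characteristic together with comparison theorems) and then Deligne (a transcendental argument), neither of which proceeds by direct manipulation of the van Kampen presentation. Your write-up is an honest and well-organised account of why one would \emph{expect} the result and where the difficulty sits, but as a proof it is incomplete at the only step that carries real content. If you want a self-contained argument at the level of these notes, you would need to import one of the genuinely global inputs above rather than attempt to close the presentation by hand.
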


In particular, Theorem~\ref{t:nodalcurves} implies that the fundamental group is a combinatorial invariant of the nodal curve $C$.
In fact, it only depends on the number of components of $C$ and the greatest common divisor of the degrees of its components.

The situation changes drastically when the singularity get even slightly more complicated, as the following theorem shows.

\begin{thm}[Zariski]
There exist two irreducible curves of degree $6$, $C_1$ and $C_2$, each having six simple cusps, i.e. six singularities whose link is a positive trefoil, such that $\pi_1(\CP^2\setminus C_1) \cong \Z/6\Z$ and $\pi_1(\CP^2\setminus C_2) \cong \Z/2\Z * \Z/3\Z$.
\end{thm}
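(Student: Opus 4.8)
The plan is to realise the two curves explicitly as branched loci and compute the fundamental groups of their complements directly, following Zariski's original construction. The key observation is that six cusps impose a strong constraint on a sextic: if the six cusps happen to lie on a conic, the curve behaves very differently than if they lie in "general position". So the strategy is to exhibit two sextics with six cusps realising the two cases, and then compute $\pi_1$ in each case separately.

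First I would construct $C_1$, the sextic whose six cusps lie on a conic. The model to keep in mind is the dual of a smooth cubic, or equivalently a curve of the form $\{f_2^3 + f_3^2 = 0\}$ where $f_2$ is a generic conic and $f_3$ a generic cubic; its singularities are exactly the six intersection points of $\{f_2=0\}$ and $\{f_3=0\}$, each of which is a simple cusp (locally $u^3 = v^2$ after choosing suitable coordinates, since $f_2$ and $f_3$ meet transversally there). To compute $\pi_1(\CP^2 \setminus C_1)$, the cleanest route is to use the map $\CP^2 \dashrightarrow \CP^1$ given by $(x:y:z) \mapsto (f_2^3 : f_3^2) = (g_2 : g_3)$ — wait, more precisely the pencil generated by $f_2^3$ and $f_3^2$; the curve $C_1$ is a fibre of the rational map $(f_2^3 : f_3^2)$, and its complement fibres (after removing the base locus) over $\CP^1$ minus a point, with generic fibre related to $\{f_2 = 0\}$ and $\{f_3 = 0\}$. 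Alternatively, and more elementarily, one takes the double cover of $\CP^2$ branched over the conic $\{f_2 = 0\}$ — this is $\CP^1\times\CP^1$ (or a Hirzebruch surface) — pulls back $C_1$, and reduces to a question about a curve of lower complexity there; iterating with the triple structure one identifies $\pi_1(\CP^2\setminus C_1)$ with $\Z/6\Z$. Since $H_1$ is already $\Z/6\Z$ by Theorem~\ref{t:homology}, it suffices to show this group is cyclic, i.e. that the van Kampen / braid monodromy relations force commutativity; the conic through the cusps is exactly what makes the monodromy abelian.

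Next I would construct $C_2$ with six cusps \emph{not} on a conic and show $\pi_1(\CP^2\setminus C_2) \cong \Z/2\Z * \Z/3\Z$. The group $\Z/2 * \Z/3 \cong \mathrm{PSL}_2(\Z)$ suggests using a curve with an obvious $\mathrm{PSL}_2(\Z)$-symmetry; the natural candidate is again built from the modular picture, for instance a sextic admitting a Kleinian symmetry, or the curve dual to a nodal cubic, or one obtained by a generic perturbation of a triangle-plus-inscribed-conic configuration arranged so the cusps are in general position. The computation of $\pi_1$ here is the genuinely hard part, and is where Zariski's insight lies: one runs the van Kampen procedure with an explicit choice of generic projection to $\CP^1$, writes down the braid monodromy around the images of the cusps and of the vertical tangencies, and reads off a presentation. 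The presentation will have two generators $a,b$ (since $c=1$, abelianisation already sees only the relation $ab$... actually $a = b^{-1}$ after abelianising to $\Z/6$), with cusp relations of the form $aba = bab$ (the braid relation coming from each simple cusp, as for the trefoil) — and the point is that, because the six cusps are \emph{not} on a conic, these relations together with the ones from the tangencies do \emph{not} collapse the group to abelian, and a careful Tietze manipulation identifies the quotient with $\langle a, b \mid aba = bab,\ (ab)^? = 1\rangle$, which one recognises as $\Z/2 * \Z/3$ via $x = ab$, $y = aba$ satisfying $x^3 = y^2 = \text{central/trivial}$. The main obstacle, then, is organising the braid monodromy computation for $C_2$ cleanly enough to extract the presentation without getting lost in the combinatorics — and, dually, being sure that the six-cusps-on-a-conic versus in-general-position dichotomy really is detected, i.e. that $\Z/6\Z \not\cong \Z/2\Z * \Z/3\Z$ (immediate, e.g. by abelianisation being $\Z/6$ for both but one being finite and the other infinite).

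Rather than carry out either van Kampen computation in full here, I would at minimum record the two building blocks that make the argument go through: (i) the simple cusp contributes precisely one braid relation $\sigma_i\sigma_{i+1}\sigma_i = \sigma_{i+1}\sigma_i\sigma_{i+1}$ to any van Kampen presentation, because the local braid monodromy of $\{u^2 = v^3\}$ is $\sigma^3$ in $B_2$ lifted appropriately; and (ii) six cusps on a conic force all meridian generators to become conjugate in an abelian way, via the auxiliary double cover branched on the conic, whereas six cusps off any conic leave enough freedom for the resulting group to be the free product. The honest statement is that this theorem is the origin of the theory of Zariski pairs, and a complete proof requires the braid monodromy machinery referenced above in Theorems~\ref{t:cyclicpi1} and~\ref{t:nodalcurves}; in these notes I will only sketch the dichotomy and refer to \cite{Zariski} and \cite{Cogolludo-braidmonodromy} for the details of the two presentations.
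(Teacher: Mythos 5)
The paper does not actually prove this theorem: it only states it, records the explicit equations (the curve $27p^2-4q^3=0$ for $C_2$, and Oka's sextic for $C_1$), and defers to \cite{Zariski} and the braid-monodromy literature, having already declared such computations beyond the scope of the notes. So your decision to sketch the constructions and the van Kampen strategy rather than carry them out is in the spirit of the text. However, your sketch contains a genuine error that would derail the argument if you tried to complete it: you have attached the two fundamental groups to the wrong geometric configurations. The sextic of the form $\{f_3^2 + f_2^3 = 0\}$, whose six cusps are the transverse intersection points of the conic $\{f_2=0\}$ with the cubic $\{f_3=0\}$ and hence \emph{do} lie on a conic, is the paper's $C_2$, the one with $\pi_1 \cong \Z/2\Z * \Z/3\Z$. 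The sextic whose six cusps do \emph{not} lie on a conic (Oka's explicit example) is $C_1$, the one with cyclic $\pi_1 \cong \Z/6\Z$. You assert the exact opposite, and both of your ``building blocks'' encode the inversion: the claims that ``the conic through the cusps is exactly what makes the monodromy abelian'' and that cusps off a conic ``leave enough freedom for the resulting group to be the free product'' are backwards.

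This is not a mere labelling slip, because the mechanism runs the other way. The special position of the cusps on a conic is precisely the extra algebraic structure that produces the large group: it is what lets the curve sit in the pencil $\lambda f_3^2 + \mu f_2^3$, giving the rational map $(f_2^3 : f_3^2)$ to $\CP^1$ that you yourself begin to describe, and Zariski's computation with this pencil (the curve being a discriminant-type locus, with $\Z/2\Z*\Z/3\Z \cong \mathrm{PSL}_2(\Z)$ arising as a quotient of the braid group $B_3$) yields the free product. Had you pushed your own pencil argument through for your ``$C_1$'', you would have obtained the non-abelian answer and contradicted your stated conclusion. Conversely, when the six cusps are in general position no such pencil exists, and Zariski shows the group is abelian, hence equal to its abelianisation $\Z/6\Z$ given by Theorem~\ref{t:homology}. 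The corrected dichotomy is therefore: cusps on a conic give $\Z/2\Z*\Z/3\Z$; cusps not on a conic give $\Z/6\Z$. With that swap made, and the existence statements filled in (the $f_3^2+f_2^3$ construction for one curve, Oka's equation for the other), your outline matches the standard treatment in the references; your final observation that the two groups are distinguished by finiteness despite having the same abelianisation is correct and is exactly what makes this the first Zariski pair.
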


An example of the curve $C_2$ is the one defined by the equation $27p^2 - 4q^3 = 0$, where $q$ is a degree-2 polynomial and $p$ is a degree-3 polynomial, such that the conic $\{q = 0\}$ and the cubic $\{p=0\}$ intersect transversely in six points. An explicit equation for $C_1$ was found by Oka~\cite{Oka-Zariskisextics} much later than Zariski's existence proof:
\[
27x^2 (x - z)^2 (x^2 + 2 xz - y^2 + z^2) + 9 (x^2 - z^2) (y^2 - z^2)^2 - (y^2 - z^2)^3
\]

Zariski's sextics are the first example of a \emph{Zariski pair}: two equisingular curves whose complements are not homeomorphic. Since then, more examples have been found, including examples of line arrangements: see the survey~\cite{Zariski-survey} by Artal Bartolo, Cogolludo Agust\'in and Tokunuga.

\section{Branched covers: dimension 2}\label{s:adjunction}

The goal of this section is to study the topology of the curve itself, without considering its embedding in $\CP^2$. The main result we present is the adjunction formula (for curves in $\CP^2$).

\begin{thm}[The adjunction formula]\label{t:adjunction}
If $C$ is a non-singular curve of degree $d$ in $\CP^2$, then $C$ is a compact orientable surface of genus $g = \frac12(d-1)(d-2)$.
\end{thm}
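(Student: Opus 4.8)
The plan is to compute the genus via a branched cover of $\CP^1$, combined with the Riemann--Hurwitz formula, and then to pin down the branching data using the structure of a generic projection. First I would observe that a non-singular curve $C = \{F = 0\}$ of degree $d$ is a compact Riemann surface (it is a smooth complex $1$-dimensional submanifold of $\CP^2$ by the implicit function theorem, and it is compact since $\CP^2$ is), hence an orientable surface of some genus $g$; the task is to determine $g$. After a projective change of coordinates I would arrange that the point $(0 \sd 1 \sd 0)$ does not lie on $C$, and consider the projection $\pi\co \CP^2 \setminus \{(0\sd 1 \sd 0)\} \to \CP^1$, $(x \sd y \sd z) \mapsto (x \sd z)$. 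Restricting to $C$ gives a holomorphic map $\pi|_C\co C \to \CP^1$ which, for a fixed value $(x_0 \sd z_0)$, has fibre equal to the set of roots $y$ of the degree-$d$ polynomial $F(x_0, y, z_0)$; by the fundamental theorem of algebra this map is a branched cover of degree $d$.

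Next I would identify the ramification. A point $(x_0 \sd y_0 \sd z_0) \in C$ is a ramification point precisely when $y_0$ is a multiple root of $F(x_0, \cdot, z_0)$, i.e. $\partial F/\partial y$ vanishes there as well. Since $C$ is non-singular, at such a point $\partial F/\partial x$ or $\partial F/\partial z$ is nonzero, which forces the ramification to be simple (the map looks like $w \mapsto w^2$ in suitable local coordinates — this is the step I would want to argue carefully, using that $\{F = \partial F/\partial y = 0\}$ meets $C$ transversely for a generic choice of projection center). Counting these points amounts to intersecting $C = \{F = 0\}$ with the curve $\{\partial F/\partial y = 0\}$ of degree $d - 1$; by the weak form of B\'ezout's theorem (the exercise following Proposition~\ref{p:curvehomology}, once one checks the intersection is transverse for a generic center, again using Sard's theorem) this gives exactly $d(d-1)$ ramification points, each with local multiplicity $2$, hence each contributing $1$ to the total ramification.

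Finally I would feed this into Riemann--Hurwitz: for the degree-$d$ branched cover $\pi|_C\co C \to \CP^1$ with total ramification $R = d(d-1)$,
\[
2 - 2g = \chi(C) = d \cdot \chi(\CP^1) - R = 2d - d(d-1),
\]
so $2g = d(d-1) - 2d + 2 = d^2 - 3d + 2 = (d-1)(d-2)$, giving $g = \tfrac12(d-1)(d-2)$ as claimed. The main obstacle is the genericity argument: one must choose the projection center $(0\sd 1\sd 0)$ off $C$ and off the finitely many "bad" directions so that (i) no fibre contains two ramification points or a ramification point of higher order, and (ii) $C$ and $\{\partial F/\partial y = 0\}$ meet transversely, so that B\'ezout's count of $d(d-1)$ is exact and each contributes a simple branch point. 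This is a standard dimension count / Sard-type argument, but it is the only place where real work beyond bookkeeping happens; everything else is the implicit function theorem, the fundamental theorem of algebra, and Riemann--Hurwitz.
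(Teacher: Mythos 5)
Your argument is correct, and it is essentially the third proof sketched in Section~\ref{s:adjunction}: project $C$ from a generic point not on $C$, observe that this is a degree-$d$ branched cover of $\CP^1$ with $d(d-1)$ simple branch points, and apply Riemann--Hurwitz. The only difference is in how the count $d(d-1)$ is packaged: the paper counts the tangent lines from the centre of projection via the resultant, while you count the ramification locus as the intersection $C \cap \{\partial F/\partial y = 0\}$ and invoke the weak B\'ezout theorem; these are the same computation, and both versions leave the same genuine work undone, namely the genericity statement that for a suitable centre the polar curve $\{\partial F/\partial y = 0\}$ meets $C$ transversely in $d(d-1)$ distinct points, each giving a simple tangency (equivalently, local model $w \mapsto w^2$). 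You correctly identify this as the one nontrivial step; the paper's sketch glosses over it in exactly the same way. It is worth noting what your route buys compared with the paper's first, fully detailed proof: that proof invokes Theorem~\ref{t:allcurvesareiso} to replace $C$ by the Fermat curve $\{x^d+y^d-z^d=0\}$, for which the projection from $(1\sd 0\sd 0)$ has completely explicit (and highly non-generic) branching --- $d$ points each of ramification index $d$ --- so no genericity argument is needed, at the cost of relying on the isotopy theorem. Your proof works for an arbitrary non-singular curve directly and never uses Theorem~\ref{t:allcurvesareiso}, at the cost of the Sard-type argument you flag. Either trade-off is legitimate; if you want your version to be self-contained you should write out the genericity lemma, for instance by showing that the set of centres $q \notin C$ for which some tangent line from $q$ is an inflectional or bitangent line is a proper closed algebraic (hence nowhere dense) subset of $\CP^2$.
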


We will see in the next section the singular analogue of this theorem, which requires looking at bit more closely at singularities and Milnor fibres (who already made an appearance in the last section).

We will use the following fact from algebraic geometry.

\begin{thm}[Folklore]\label{t:allcurvesareiso}
Any two non-singular curves $C, C'$ of degree $d$ in $\CP^2$ are isotopic (and in particular they are homeomorphic).
\end{thm}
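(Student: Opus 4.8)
The plan is to realise all non-singular degree-$d$ curves as the fibres of a single locally trivial family over a connected base, and then to promote a trivialisation of that family into an ambient isotopy of $\CP^2$. I would parametrise degree-$d$ curves by the projective space $\P^N = \P(\C[x,y,z]_d)$ with $N = \binom{d+2}{2} - 1$, i.e.\ non-zero homogeneous polynomials of degree $d$ up to scaling. Inside $\P^N$ sits the \emph{discriminant} $\Delta$, the locus of $[f]$ for which $\{f = 0\}$ is singular; it is Zariski-closed (the condition that $f$ and $\nabla f$ have a common zero is algebraic, e.g.\ via resultants) and proper, since the Fermat polynomial $x^d + y^d + z^d$ defines a non-singular curve. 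Being a complex subvariety of positive codimension, $\Delta$ has real codimension at least $2$, so $\P^N\setminus\Delta$ is connected, hence path-connected; one may even take the path inside the pencil spanned by $C$ and $C'$, a projective line that meets $\Delta$ in finitely many points, none of them $[C]$ or $[C']$. Fix a smooth path $\gamma\co [0,1]\to \P^N\setminus\Delta$ from $[C]$ to $[C']$ and write $C_t = \gamma(t)$.

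Next I would introduce the incidence variety $\mathcal{Z}_0 = \{([f],p)\in \P^N\times\CP^2 : f(p) = 0\}$. Projecting to the second factor exhibits $\mathcal{Z}_0$ as a $\P^{N-1}$-bundle over $\CP^2$ — for fixed $p$ the condition $f(p)=0$ cuts out a hyperplane in $\P^N$ — so $\mathcal{Z}_0$ is a smooth compact complex manifold, closed in $\P^N\times\CP^2$. Set $\mathcal{Z} = \mathcal{Z}_0\cap\big((\P^N\setminus\Delta)\times\CP^2\big)$, which is open and hence smooth, and closed in $(\P^N\setminus\Delta)\times\CP^2$. The first projection $q\co\mathcal{Z}\to\P^N\setminus\Delta$ is then proper (as $\CP^2$ is compact), and it is a submersion at each $([f],p)$: unwinding tangent spaces, one must solve $df_p(v) = -\,\delta f(p)$ for $v\in T_p\CP^2$ given an arbitrary infinitesimal variation $\delta f$ of $f$, and this is possible precisely because $df_p\neq 0$, which is exactly non-singularity of $\{f=0\}$ at $p$ (the passage between the homogeneous and affine pictures being handled by Euler's relation). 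Thus $q$ is a proper submersion, hence by Ehresmann's theorem a locally trivial fibre bundle; pulled back over the contractible base $[0,1]$ it is trivial, which already gives $C\cong C_t$ for all $t$.

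To upgrade this to an ambient isotopy I would work with the pulled-back family $\mathcal{Z}_\gamma = \{(t,p)\in[0,1]\times\CP^2 : p\in C_t\}$, a compact smooth manifold (with boundary) for which the projection to $[0,1]$ restricts to a submersion $\mathcal{Z}_\gamma\to[0,1]$ with fibre $C_t$. I would then construct a vector field $\xi$ on $[0,1]\times\CP^2$ lifting $\partial/\partial t$ and everywhere tangent to $\mathcal{Z}_\gamma$: near $\mathcal{Z}_\gamma$ such a lift exists because $\mathcal{Z}_\gamma\to[0,1]$ is a submersion, hence locally a projection; away from $\mathcal{Z}_\gamma$ one takes an arbitrary lift; and the two are patched with a partition of unity, tangency along $\mathcal{Z}_\gamma$ being preserved under convex combinations. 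Since $\CP^2$ is compact and $\xi$ projects to $\partial/\partial t$, the flow of $\xi$ is defined up to time $1$; the time-$t$ flow carries $\{0\}\times C$ to $\{t\}\times C_t$, and composing with the projection $[0,1]\times\CP^2\to\CP^2$ produces an isotopy of $\CP^2$ taking $C$ to $C'$. In particular $C$ and $C'$ are homeomorphic.

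I expect the main obstacle to be precisely this last step — producing a lift of $\partial/\partial t$ tangent to $\mathcal{Z}_\gamma$, which is in essence Ehresmann's theorem ``relative to a submanifold'' (equivalently, Thom's first isotopy lemma for the stratification of $[0,1]\times\CP^2$ by $\mathcal{Z}_\gamma$ and its complement). What makes it go through is the smoothness of $\mathcal{Z}_\gamma$ together with submersivity of $\mathcal{Z}_\gamma\to[0,1]$, both of which use that every $C_t$ is non-singular, i.e.\ that $\gamma$ avoids $\Delta$. One should also make sure that $\Delta$ is genuinely a proper subset of $\P^N$ — that non-singular curves of each degree exist at all — but the Fermat example settles this.
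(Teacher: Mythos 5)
Your proof is correct and follows essentially the same route as the paper: parametrise degree-$d$ curves by the projective space $\P^N$, observe that the discriminant is a proper subvariety of positive (complex) codimension so its complement is connected, and apply Ehresmann's fibration theorem along a path joining $[C]$ to $[C']$. You have simply filled in the details that the paper's rough sketch leaves implicit, in particular the submersivity of the incidence variety over $\P^N\setminus\Delta$ and the upgrade from fibrewise triviality to an ambient isotopy via a lifted vector field tangent to the family.
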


\begin{proof}[Rough sketch of proof]
The set of all curves of degree $d$ in $\CP^2$ (including non-reduced curves) is a projective space $P$ of dimension ${{d+2} \choose 2} - 1$. The set of singular curves is a (complex, singular) codimension-1 subvariety of $P$. A complex codimension-1 subvariety does not disconnect $P$, so the set of non-singular curves is connected. A path between the two curves now induces an isotopy, by Ehresmann's fibration theorem.
\end{proof}

Thanks to the theorem above, to prove the adjunction formula we just need to show it for one specific curve.
There are four ways to do this. We will give one detailed proof, and sketch the other three.

Before going into the proof, recall that a holomorphic map between Riemann surfaces is locally modelled on the map $\mu_n \co \C\to \C$, $z\mapsto z^n$. We can take this local model and use it as the definition of a branched cover (of surfaces).

\begin{defn}\label{d:bc2}
Let $X$ and $Y$ be two compact, oriented surfaces, and $p\co X\to Y$ a smooth map. $p$ is called a \emph{branched cover} if there exists a finite set $B \subset Y$ such that, calling $X' = X \setminus p\inv(B)$, $Y' = Y \setminus B$, and $p'\co X' \to Y'$ the restriction of $p$ to $X'$ (with the restriction on its codomain, too):
\begin{itemize}\itemsep 0pt
\item $p'$ is a covering map (i.e. a local homeomorphism);
\item for every point $x \in p\inv(B)$, there are complex coordinates around $x$ and $p(x)$ in which $p$ is $\mu_{n_x}$.
\end{itemize}
$B$ is called the \emph{branching set}, and $p'$ is the \emph{cover} associated to $p$. We also say, by a slight abuse of terminology, that $X$ is a branched cover of $Y$, or that $X$ is a cover of $Y$ branched over $B$. $n_x$ is called the \emph{index of ramification} of $p$ at $x$, which we will denote with $e_p(x)$. (Note that for every point in $X'$ we have $e_p(x) = 1$, and that there could be points in $p\inv(B)$ for which $e_p(x) = 1$.) The \emph{degree} of $p$ is the degree of $p'$, i.e. the number of points in the preimage of any point in $Y'$.
\end{defn}

Note that automatically $n_x > 0$ for each $x$. However, given $y \in B$, $n_x$ can vary when $x \in p\inv(y)$ and it could be 1 for some point in $p\inv(y)$ and strictly larger than 1 for some other. Also, tautologically, non-constant holomorphic maps between Riemann surfaces are branched covers.

Recall the following theorem, known as the Hurwitz formula.

\begin{thm}[Hurwitz formula]
If $p \co X \to Y$ is a branched cover of degree $d$, then
\[
\chi(X) = d\chi(Y) - \sum_{x\in X} (e_p(x)-1).
\]
\end{thm}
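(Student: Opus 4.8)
The plan is to prove the Hurwitz formula by computing Euler characteristics via a well-chosen triangulation (or CW-structure) of the target $Y$ that is compatible with the covering map. First I would choose a finite triangulation $\mathcal{T}$ of $Y$ with the property that every branch point $y \in B$ is a vertex of $\mathcal{T}$. This is always possible: start with any triangulation and subdivide until each of the finitely many points of $B$ lies in the interior of no simplex except a vertex. Then I would pull back $\mathcal{T}$ under $p$ to obtain a CW-structure $\widetilde{\mathcal{T}}$ on $X$: over the open simplices of $Y$ that miss $B$, the map $p'$ is an honest degree-$d$ covering, so each such open simplex is evenly covered and has exactly $d$ preimage cells of the same dimension; over a vertex $y \in B$, the preimages are the finitely many points of $p^{-1}(y)$, each contributing a single $0$-cell.

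The key computation is then bookkeeping with these cell counts. If $\mathcal{T}$ has $V$ vertices, $E$ edges, and $F$ faces, then $\chi(Y) = V - E + F$. Every edge and every face of $Y$ lifts to exactly $d$ cells of the same dimension in $X$, contributing $d(F - E)$ to $\chi(X)$. For the vertices: a vertex $y \notin B$ lifts to exactly $d$ vertices, while a vertex $y \in B$ lifts to $|p^{-1}(y)|$ vertices. Thus the vertex count in $X$ is
\[
\widetilde{V} = d(V - |B|) + \sum_{y \in B} |p^{-1}(y)| = dV - \sum_{y \in B}\bigl(d - |p^{-1}(y)|\bigr).
\]
The core identity, which comes straight from the local model $\mu_{n_x}$ and the definition of degree, is that for each $y \in Y$ one has $\sum_{x \in p^{-1}(y)} e_p(x) = d$, hence $d - |p^{-1}(y)| = \sum_{x \in p^{-1}(y)}\bigl(e_p(x) - 1\bigr)$. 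Since $e_p(x) = 1$ for all $x \notin p^{-1}(B)$, summing over $y \in B$ gives $\sum_{y \in B}\bigl(d - |p^{-1}(y)|\bigr) = \sum_{x \in X}\bigl(e_p(x) - 1\bigr)$, the sum on the right being finite. Assembling, $\chi(X) = \widetilde{V} - dE + dF = d\chi(Y) - \sum_{x \in X}\bigl(e_p(x)-1\bigr)$, as claimed.

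The main obstacle — really the only non-formal point — is justifying that $\widetilde{\mathcal{T}}$ is genuinely a CW-structure (or triangulation) on $X$, i.e. that the preimage cells fit together correctly and that $p$ restricted to each closed preimage cell is a homeomorphism onto its image. Away from $B$ this is immediate from $p'$ being a covering map. Near a point $x$ with $e_p(x) = n_x > 1$, one must check that a small triangulated disk neighbourhood of $y = p(x)$ pulls back, under $z \mapsto z^{n_x}$, to a triangulated disk neighbourhood of $x$ — this is transparent if one arranges (by subdividing) that $y$ has a star consisting of $n_x \cdot k$ triangles arranged symmetrically around it for a suitable $k$, so that the $n_x$-fold branched model simply ``unwinds'' the star. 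One should also note that $X$ is connected is not needed: the formula holds componentwise and hence in general. Finally, one records the standard fact $\sum_{x \in p^{-1}(y)} e_p(x) = d$ for every $y$, which can itself be proved by a local degree argument or simply cited as the defining property that makes $p$ a degree-$d$ branched cover.
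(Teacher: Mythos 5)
Your proof is correct and is essentially the same argument as the paper's: both reduce to the identity $\sum_{x\in p^{-1}(y)} e_p(x) = d$, which converts the deficit in the fibre over each branch point into the correction term $\sum_x (e_p(x)-1)$. The only difference is presentational --- the paper invokes additivity of $\chi$ over the decomposition $X = X' \sqcup p^{-1}(B)$ together with multiplicativity of $\chi$ under honest covers, whereas you justify exactly those two facts explicitly by pulling back a triangulation of $Y$ adapted to $B$ and counting cells.
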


\begin{proof}[Sketch of proof]
Call $B \subset Y$ the branching set of $Y$. We have:
\[
\chi(X) = \chi(X') + \chi(p\inv(B)), \quad\quad \chi(Y) = \chi(Y') + \chi(B).
\]
Since $p'$ is an honest cover, we also have $\chi(X') = d\chi(Y')$. Now observe that for every point $y\in B$,
\[
\chi(p\inv(y)) = d - \sum_{p(x) = y} (e_p(x)-1) = d\chi(\{y\}) - \sum_{p(x) = y} (e_p(x)-1).\qedhere
\]
\end{proof}

\begin{proof}[First proof of the adjunction formula]
By Theorem~\ref{t:allcurvesareiso}, we can choose the curve we prefer. Let us pick the curve $F$ of degree $d$ defined by the equation $x^d+y^d-z^d = 0$. (Exercise: check that it is non-singular.) We want to construct a branched cover, so we project $\CP^2\setminus\{(1\sd 0\sd 0)\}$ onto the $(y\sd z)$-line by sending $(x \sd y \sd z)\mapsto(y \sd z)$. (Exercise: check that this is well-defined and holomorphic.) Now restrict this cover to the curve $F$, which gives us a holomorphic map $p\co F\to \CP^1 \cong S^2$.

We claim (exercise!) that each of the $d$ points $(e^{2k\pi i/d}\sd 1)$ has exactly one preimage, while every other point has exactly $d$ preimages. This means that $\deg p = d$ and that there are $d$ points in $F$ for which $e_p(x) = d$ and every other point is $1$. Using the Hurwitz formula:
\[
2 - 2g(F) = d\chi(\CP^1) - \sum_{x\in F} (e_p(x)-1) = 2d - d(d-1) = 3d-d^2,
\]
from which the statement follows.
\end{proof}

\begin{proof}[Sketch of a second proof of the adjunction formula]
By Theorem~\ref{t:allcurvesareiso}, we can choose the curve we prefer. We pick a very singular curve $C_0$, given as the product of $d$ generic lines, defined by a polynomial $f$ which splits in $d$ distinct linear factors, and then we slighly deform it by taking $f+\epsilon g$ for some (generic) polynomial $g$ and some small $\epsilon$. Call $C$ the zero-set of this deformed polynomial.

$C_0$ is the union of $d$ generic lines, which therefore meet at $d \choose 2$ double points. The effect of the deformation is to make the curve smooth by replacing the neighbourhood of a double point (which looks like $\{xy = 0\}$) by a deformed equation $\{xy = \eta\}$. The net effect is to replace two discs meeting at one point with an annulus (why?\footnote{This will be clarified in the next section.}).

At the level of Euler characteristic, each time we are replacing something of Euler characteristic $1$ (since two discs meeting at a point are contractible) by something of Euler characteristic $0$ (because it is an annulus). Moreover, since $C_0$ is a union of $d$ 2-spheres meeting at $d \choose 2$ double points, $\chi(C_0) = 2d- {d\choose 2}$ (why?). So:
\[
\chi(C) = \chi(C_0) - {d \choose 2} = 2d - 2{d \choose 2} = 3d-d^2,
\]
and again we are done.
\end{proof}

\begin{proof}[Sketch of a third proof of the adjunction formula]
Pick any non-singular curve $C$ and a generic point $q$ in $\CP^2$. The set of tangents that can be drawn from the point to the curve is given by the resultant of the defining polynomial of $C$, so (since $q$ is chosen generically) it consists of $d(d-1)$ lines that are simply-tangent to $C$. We now apply the Hurwitz formula to the projection from the point $q$, which is a degree-$d$ branched cover with $d(d-1)$ branching points, each of which has a single preimage with ramification index $2$ and all other preimages with ramification index $1$. We get:
\[
\chi(C) = d\chi(S^2) -  \sum_{x\in F} (e_p(x)-1) = 2d - d(d-1)\cdot 1 = 3d-d^2.\qedhere
\]
\end{proof}

\begin{rmk}
Any other proof of the adjunction formula can be used in reverse to prove the following fact: given a curve $C\subset \CP^2$, for a generic point $q$ in $\CP^2$ there exist exactly $d(d-1)$ lines passing through $q$ and tangent to $C$.
\end{rmk}

To close things out, we sketch the ``best'' (or at least the most algebro-geometric and most general) proof of the adjunction formula.

\begin{proof}[Sketch of a fourth proof of the adjunction formula]
The restriction $\rho$ of the tangent bundle of $\CP^2$ to $C$ splits as a direct sum of two complex line bundles: the tangent bundle $\tau$ of $C$ and the normal bundle $\nu$ to $C$. Since for line bundles the first Chern class is the Euler class, $\langle c_1(\tau), [C] \rangle = \chi(C)$, and $\langle c_1(\nu), [C] \rangle = C\cdot C = d^2$. On the other hand, $\langle c_1(\rho), [C] \rangle = d\langle c_1(\rho), h\rangle$. Let $k := \langle c_1(\rho), h\rangle$.

By Whitney's formula, $c_1(\rho) = c_1(\tau \oplus \nu) = c_1(\tau) + c_1(\nu)$, so:
\[
dk = \chi(C) + d^2.
\]
Now it is enough to compute $k$. There are two ways of doing this: the grown-up way is to prove that $c_1(\rho) = c_1(\det \rho)$, where $\det \rho = \rho \wedge \rho$ is a line bundle\footnote{If we did the same construction starting with the cotangent bundle instead of the tangent bundle, we would obtain the so-called \emph{canonical bundle} of $\CP^2$. This construction extends to arbitrary non-singular varieties.} on $C$, then sit down, find a section of the bundle $\det \rho$ and count how many zeros it has on $C$. The lazy way is to use the formula above for a curve of degree 1 (which we know is a 2-sphere, so it has $\chi = 2$), and get $k = 3$. Either way, we obtain that $\chi(C) = 3d-d^2$.
\end{proof}

A question for the less attentive reader: where did we use the fact that $C$ is a complex curve, and not just an arbitrary embedded surface, in this last proof?

\begin{rmk}
The fourth proof is a lot more general, in that it shows that there is a general relationship, for a curve $C$ in a complex surface $S$, between three quantities: $\langle c_1(TS), [C]\rangle$, $\chi(C)$, and $C\cdot C$:
\[
\langle c_1(TS), [C]\rangle = \chi(C) + C \cdot C.
\]
An algebraic geometer would use the canonical class $K_S$ instead of $c_1(TS)$ and would write:
\[
K_S\cdot C + \chi(C) + C\cdot C = 0,
\]
which they would recognise as \emph{the} adjunction formula.
\end{rmk}


\section{Singularities and the adjunction formula}\label{s:singularadjunction}

The goal of this section is to give a refinement of the adjunction formula for singular curves, as well as a criterion for obstructing the existence of curves with a given degree and certain configurations of singularities.

We want to study the local behaviour of singularities and their smoothings. Recall from Section~\ref{s:curves} that for every point $p$ on a curve $C \subset \C^2$ there exists a small Euclidean ball $B$ centred at $p$ such that:
\[
(B,C\cap B) \simeq (\Cone(\del B), \del B \cap C).
\]

Let $C = \{f = 0\}$ be a curve in $\C^2$ passing through the origin. Fix $\epsilon > 0$ so that the ball $B$ of radius $\epsilon$ and centred at the origin satisfies the cone condition above. Then for $\eta \in \C^*$ of sufficiently small absolute value, $\{f = \eta\} \cap B$ is a smoothly embedded and connected surface whose boundary is isotopic to $\del B \cap C$. It turns out that the isotopy class of this surface is independent of the chosen $\eta$.

\begin{defn}
In the notation above, the \emph{Milnor fibre} $M_{(C,p)}$ of the singularity of the singularity of $C$ at $p$ is the surface $\{f = \eta\}$. The \emph{Milnor number} $\mu_{(C,p)}$ of $(C,p)$ is $b_1(M_{(C,p)})$.
\end{defn}

\begin{thm}[Milnor]
The Milnor fibre $M_{(C,p)}$ can be pushed into $S^3$ (relative to its boundary) and gives a minimal-genus Seifert surface for the link $L_{(C,p)}$ of the singularity. This minimal-genus surface is the page of a fibration $S^3\setminus L_{(C,p)} \to S^1$ (and of a fibration $B^4\setminus C \to S^1$).
\end{thm}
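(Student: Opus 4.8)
The plan is to reduce the whole statement to Milnor's fibration theorem \cite{Milnor-singularities}, applied to the germ of the defining polynomial of $C$ at $p$. Choose local holomorphic coordinates identifying $p$ with the origin of $\C^2$ and write $C=\{f=0\}$; since $\Sing(C)$ is finite, $f$ has an isolated critical point at $0$. Pick $\epsilon>0$ so small that the round sphere $S^3_\epsilon=\del B^4_\epsilon$ is transverse to $C$, cutting out the link $L=L_{(C,p)}$, and that the cone condition from Section~\ref{s:curves} holds, and consider the argument map
\[
\phi\co S^3_\epsilon\setminus L\to S^1,\qquad \phi(z)=\frac{f(z)}{|f(z)|}.
\]
The first step is to invoke Milnor's theorem that $\phi$ is a smooth locally trivial fibration, together with his description of $\phi$ in a neighbourhood of $L$ (there is ``no blowing up'' along the link), from which it follows that the closure in $S^3_\epsilon$ of each fibre $F_\theta=\phi\inv(e^{i\theta})$ is a smooth compact surface with $\del\overline{F_\theta}=L$. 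Thus $\phi$ exhibits $S^3_\epsilon$ as an open book with binding $L$ and pages the Seifert surfaces $\overline{F_\theta}$ of $L$.

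The second step is to recognise the Milnor fibre $M_{(C,p)}=\{f=\eta\}\cap B^4_\epsilon$ as one of these pages, up to isotopy, and to push it onto $S^3_\epsilon$. Here I would use the Milnor tube: for $0<\delta\ll\epsilon$, the restriction of $f$ to $f\inv(\{0<|w|\le\delta\})\cap B^4_\epsilon$ is a fibration over the punctured disk whose fibre over $\eta$ is $\{f=\eta\}\cap B^4_\epsilon$, and restricting to $|w|=\delta$ gives a fibration of $T:=f\inv(\{|w|=\delta\})\cap B^4_\epsilon$ over $S^1$. Milnor's comparison of the tube with the sphere supplies a diffeomorphism between $T$ and the exterior in $S^3_\epsilon$ of an open tubular neighbourhood of $L$ which intertwines the projection of $T$ with $\phi$; transporting $\{f=\eta\}\cap B^4_\epsilon$ along it shows that the Milnor fibre is diffeomorphic to a page $\overline{F_\theta}$ and that the identification is realised by an ambient isotopy of $B^4_\epsilon$ carrying $M_{(C,p)}$ onto $\overline{F_\theta}\subset S^3_\epsilon$. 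After a preliminary isotopy matching $\del M_{(C,p)}$ with $L$, this isotopy may be taken to fix $L$ pointwise, which is exactly the claim that $M_{(C,p)}$ can be pushed into $S^3$ relative to its boundary so as to become a page. The companion fibration $B^4_\epsilon\setminus C\to S^1$ with page $M_{(C,p)}$ then follows by composing the tube fibration with the radial retraction $\{0<|w|\le\delta\}\to S^1$ and extending over the complementary region, using once more that $f/|f|$ has no critical points on $B^4_\epsilon\setminus C$.

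The third step, minimality of the genus, I would not prove from scratch but deduce from the classical fact that a fibre of a fibration of a link exterior over $S^1$ is Thurston-norm minimising, hence of minimal genus among all Seifert surfaces of the link (for knots this is due to Neuwirth and Stallings via the Alexander polynomial; in general it follows from the theory of the Thurston norm, or from incompressibility of the fibre together with the fact that the infinite cyclic cover of the exterior deformation retracts onto it). Combined with Step~2, this yields that $M_{(C,p)}$ is a minimal-genus Seifert surface for $L_{(C,p)}$.

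The genuine difficulty is concentrated in Step~1: that $\phi=f/|f|$ has no critical points on $S^3_\epsilon\setminus L$ rests on the curve-selection lemma and Milnor's gradient estimates, and the assertion that the fibre closures are smooth Seifert surfaces needs the normal form for $f$ near $L$. In lecture notes I would present these as a black box with a reference to \cite{Milnor-singularities} and spend the exposition on Step~2 --- especially the tube-versus-sphere comparison, which is where the packaging ``Milnor fibre $=$ page'' is actually assembled --- and on the citation in Step~3.
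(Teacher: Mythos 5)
The paper gives no proof of this theorem: it is stated as a black box attributed to Milnor, so there is no argument of the paper's to compare yours against step by step. Your outline is the standard derivation from \cite{Milnor-singularities} and is correct: the sphere fibration $f/|f|$, the tube fibration over a small circle of values, the tube-versus-sphere comparison that identifies $\{f=\eta\}\cap B^4_\epsilon$ with a page, and the fact that fibres of fibred link exteriors minimise genus. Two points deserve care if you were to write this out in full. First, the fibration $B^4_\epsilon\setminus C\to S^1$ is not literally the statement Milnor proves (he fibres the sphere complement and the tube); extending over all of the punctured ball is the Milnor--L\^e formulation and does require the vector-field/no-critical-points argument you allude to, as a separate assertion from submersivity on the sphere. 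Second, for genus minimality make sure the fact you quote covers \emph{links} and not only knots: the Alexander-polynomial bound of Neuwirth--Stallings handles the knot case, whereas for a multi-component link of a singularity you need the Thurston-norm (or incompressibility) statement that a fibre surface minimises genus among Seifert surfaces for the whole link. Neither is a gap in the plan, only places where the citations must be matched to the exact claims; for lecture notes your division into a quoted Step~1 and an expository Step~2 is exactly the level of detail the paper itself adopts by citing Milnor and moving on.
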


Given a singular point $(C,p)$, we also denote by $\beta_{(C,p)}$ the number of branches of $C$ at $p$, that is the number of components of the link of $(C,p)$. Informally, the number of branches is the number of local irreducible components of $C$ that meet at the point $p$.

\begin{exe}
Show that the Milnor fibre of the double point $\{xy = 0\}$ is an annulus, so that its Milnor number is 1. Show that it has two branches.
\end{exe}

\begin{exe}\label{exe:torusknots}
Show that the Milnor fibre of the singularity of $\{x^a = y^b\}$ at the origin has $\beta = \gcd(a,b)$. Show that if $\gcd(a,b) = 1$, then $\mu = (a-1)(b-1)$. (First hint: the link of the singularity is a torus link. Second hint: the genus of torus can be computed by giving an explicit lower bound and an explicit upper bound, or it can be computed by exhibiting a fibration and using the fact that fibres are genus-minimising.)
\end{exe}

Since the Milnor fibre is independent of the chosen deformation, when we have a projective curve $C$ and we add a generic deformation, we are replacing the neighbourhood of each point by a non-singular surface with $b_1 = \mu$ and with $\beta$ boundary components. In particular, we replace a contractible piece of $C$ with a piece with Euler characteristic $1-\mu$.

Recall from Theorem~\ref{t:holomorphicimage} that every irreducible complex curve $C$ is the image of a holomorphic map from a connected Riemann surface $\Sigma$ into $\CP^2$. This map is essentially unique if we impose that the map is somewhere one-to-one. The genus $\Sigma$ is then called the \emph{geometric genus}. We denote by $\chi_g(C)$ the Euler characteristic of $\Sigma$.

\begin{exe}
Show that the quantity $\chi_g(C)$ agrees with the Euler characteristic of $C$ as a subspace of $\CP^2$ if and only if $C$ has only cuspidal singularities. Show that in general the difference between the two is $\sum_p (\beta_{(C,p)} - 1)$.
\end{exe}

To sum up, using the adjunction formula, the independence of the deformation choices, and the exercise above, we easily prove the following generalisation of the adjunction formula.

\begin{thm}[Singular adjunction formula]\label{t:singularadjunction}
Let $C$ a curve with irreducible components $C_1, \dots, C_n$. Then
\begin{equation}\label{e:singadj}
\sum \chi_g(C_k) = 3d-d^2 + \sum_p (\mu_{(C,p)} + \beta_{(C,p)} - 1).
\end{equation}
\end{thm}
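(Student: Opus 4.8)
The plan is to reduce the singular case to the non-singular adjunction formula (Theorem~\ref{t:adjunction}) via a generic smoothing, tracking the change in Euler characteristic locally at each singular point and globally. Start with a curve $C$ with irreducible components $C_1,\dots,C_n$ of degrees $d_1,\dots,d_n$ summing to $d$, and consider the deformation $C_t = \{f + tg = 0\}$ for a generic polynomial $g$ of degree $d$ and small $t \neq 0$. By Theorem~\ref{t:allcurvesareiso}, $C_t$ is a non-singular curve of degree $d$, hence by the adjunction formula $\chi(C_t) = 3d - d^2$.

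Next I would compare $\chi(C_t)$ with $\sum \chi_g(C_k)$ by a cut-and-paste argument. Away from $\Sing(C)$, the deformation changes nothing up to isotopy, so the comparison is entirely local around each $p \in \Sing(C)$. Fix a small Milnor ball $B_p$ at each such $p$. The key input, already assembled in this section, is that $C_t \cap B_p$ is isotopic to the Milnor fibre $M_{(C,p)}$, which has Euler characteristic $1 - \mu_{(C,p)}$ and $\beta_{(C,p)}$ boundary circles; whereas $C \cap B_p$ is a cone on a link with $\beta_{(C,p)}$ components, hence contractible with Euler characteristic $1$. Excising these balls and gluing, one gets $\chi(C_t) = \chi(C \setminus \bigcup_p \Int B_p) + \sum_p \chi(M_{(C,p)})$, and similarly $\chi(C) = \chi(C \setminus \bigcup_p \Int B_p) + \sum_p 1$; subtracting gives
\[
\chi(C_t) = \chi(C) - \sum_p \mu_{(C,p)}.
\]
(One should be slightly careful that the gluing regions — the $\beta_{(C,p)}$ boundary circles — contribute $\chi = 0$ and so do not affect the bookkeeping; this is where the inclusion–exclusion for Euler characteristic is used.)

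Finally I would pass from $\chi(C)$, the Euler characteristic of $C$ as a subset of $\CP^2$, to $\sum_k \chi_g(C_k)$ using the exercise stated just above the theorem: $\sum_k \chi_g(C_k) - \chi(C) = \sum_p (\beta_{(C,p)} - 1)$. The point here is that the normalisation $\Sigma = \bigsqcup \Sigma_k \to C$ is a homeomorphism away from the singular points and is $\beta_{(C,p)}$-to-one over each $p$, so normalising separates the $\beta_{(C,p)}$ local branches and raises the Euler characteristic by exactly $\beta_{(C,p)} - 1$ at each singular point. Combining the three displayed relations,
\[
\sum_k \chi_g(C_k) = \chi(C) + \sum_p (\beta_{(C,p)} - 1) = \chi(C_t) + \sum_p \mu_{(C,p)} + \sum_p (\beta_{(C,p)} - 1) = 3d - d^2 + \sum_p (\mu_{(C,p)} + \beta_{(C,p)} - 1),
\]
which is \eqref{e:singadj}.

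The main obstacle is the local model step: justifying rigorously that the generic smoothing $C_t \cap B_p$ realises the Milnor fibre, uniformly over all singular points and compatibly with a single global deformation parameter $t$. This requires knowing that for a generic $g$ the deformation is simultaneously a versal-enough smoothing at every $p \in \Sing(C)$, that the Milnor radius $\epsilon$ can be chosen uniformly, and that for $|t|$ small the curve $C_t$ meets each $\partial B_p$ transversely in a link isotopic to $L_{(C,p)}$ — i.e., the compatibility of the Milnor fibration with the global picture. All of this is standard (it is essentially the content of the discussion preceding the theorem, together with Milnor's fibration theorem), so in the write-up I would invoke it rather than prove it from scratch; everything else is routine Euler-characteristic bookkeeping.
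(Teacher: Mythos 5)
Your proposal is correct and follows exactly the route the paper takes: it assembles the adjunction formula for a generic smoothing, the local replacement of the cone (with $\chi=1$) by the Milnor fibre (with $\chi=1-\mu_{(C,p)}$), and the normalisation correction $\sum_k\chi_g(C_k)-\chi(C)=\sum_p(\beta_{(C,p)}-1)$ from the exercise preceding the theorem. If anything, your write-up is more explicit than the paper's (which compresses the proof into the surrounding discussion and the phrase ``we easily prove''), and you correctly isolate the one genuinely technical point --- that a single generic global deformation realises the Milnor fibre simultaneously in every Milnor ball.
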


A particularly nice case is that of \emph{rational cuspidal curves}. These are irreducible curves whose singularities are all cusps, i.e. they have connected link. Equivalently, they are curves such that $C$ \emph{as a subset of} $\CP^2$ is homeomorphic to a 2-sphere. Yet another way of saying this is that $C$ is (isotopic to) a PL-embedded 2-sphere. In this case, the adjunction formula simplifies as follows.

\begin{cor}
Let $C$ be a rational cuspidal curve in $\CP^2$. Then
\[
\sum g(L_{(C,p)}) = \frac12(d-1)(d-2).
\]
In particular, if $C$ has a single singularity which is of type $T(a,b)$, then
\[
(a-1)(b-1) = (d-1)(d-2).
\]
\end{cor}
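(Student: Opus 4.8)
The plan is to derive everything directly from the singular adjunction formula, Theorem~\ref{t:singularadjunction}, by unpacking what ``rational cuspidal'' means. First I would record the three constraints hidden in the hypothesis: being a curve (irreducible, by convention in the definition of rational cuspidal here), so there is a single component $C_1 = C$; being rational, so the normalising Riemann surface $\Sigma$ of Theorem~\ref{t:holomorphicimage} has genus $0$ and hence $\chi_g(C) = 2$; and being cuspidal, so every singular point $p$ has connected link, i.e. $\beta_{(C,p)} = 1$. Substituting these into \eqref{e:singadj} collapses the $\beta$-terms and gives $2 = 3d - d^2 + \sum_p \mu_{(C,p)}$, that is
\[
\sum_p \mu_{(C,p)} = d^2 - 3d + 2 = (d-1)(d-2).
\]

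Next I would convert Milnor numbers into Seifert genera. By Milnor's theorem quoted in Section~\ref{s:singularadjunction}, $M_{(C,p)}$ is a minimal-genus Seifert surface for $L_{(C,p)}$; since $p$ is a cusp, $L_{(C,p)}$ is a knot, so this surface is connected with a single boundary component, and therefore $\mu_{(C,p)} = b_1(M_{(C,p)}) = 2g(L_{(C,p)})$. Plugging this into the displayed identity and dividing by $2$ yields $\sum_p g(L_{(C,p)}) = \tfrac12(d-1)(d-2)$, which is the first assertion.

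For the second assertion I would specialise to a curve with a unique singular point whose link is the torus knot $T(a,b)$ (so necessarily $\gcd(a,b)=1$). By Exercise~\ref{exe:torusknots}, the Milnor number of this singularity is $(a-1)(b-1)$, equivalently $g(T(a,b)) = \tfrac12(a-1)(b-1)$; feeding this into either of the two formulas above gives $(a-1)(b-1) = (d-1)(d-2)$.

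I do not expect a serious obstacle here: the only point that deserves care is the passage from $\mu_{(C,p)}$ to $g(L_{(C,p)})$, where one must genuinely use that the link of a cusp is a \emph{knot} (one boundary component) together with Milnor's genus-minimality statement — otherwise the factor of $2$ is not guaranteed. Everything else is bookkeeping in the adjunction formula.
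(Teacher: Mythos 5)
Your proof is correct and follows exactly the route the paper intends: the corollary is stated as an immediate specialisation of the singular adjunction formula, and your substitutions ($\chi_g(C)=2$, $\beta_{(C,p)}=1$, and $\mu_{(C,p)}=2g(L_{(C,p)})$ for a knotted link via Milnor's genus-minimality) are precisely the bookkeeping the paper leaves implicit. No gaps.
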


Rational cuspidal curves with a single singularity which is of type $T(a,b)$ are actually classified.

\begin{thm}[Fern\'andez de Bobadilla, Luengo, Melle Hern\'andez, N\'emethi~\cite{FdBLMHN}]
If the rational cuspidal curve $C \subset \CP^2$ has degree $d$ and a singularity of type $T(a,b)$, then $(a,b,d)$ is one of the following:
\[
\begin{array}{lcclccl}
(a,a+1,a+1), & & & (F_k, F_{k+4}, F_{k+2}), & & &  (3,22),\\
(a,4a-1,2a), & & & (F_k^2, F_{k+2}^2, F_{k+1}), & & & (6,43).\\
\end{array}
\]
where $a \ge 2$, $k \ge 1$ is odd, and $F_0, F_1, F_2, \dots$ are the Fibonacci numbers $0,1,1,\dots$
\end{thm}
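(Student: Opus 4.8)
The statement has two halves: that no triple outside the list occurs (the obstruction) and that every listed triple is realised (the construction). I would attack them separately, the obstruction being the deeper. For the obstruction I would first extract the elementary necessary condition and then layer two genuinely hard inputs on top of it. By hypothesis $C$ is rational, hence a topological $2$-sphere, so $\chi_g(C)=2$, and its unique singular point $p$ is cuspidal with link $T(a,b)$; as in Exercise~\ref{exe:torusknots} this forces $\gcd(a,b)=1$, local semigroup $S=\langle a,b\rangle$, conductor $(a-1)(b-1)$, and $\mu_{(C,p)}=(a-1)(b-1)$. Feeding $\beta_{(C,p)}=1$ and $\chi_g(C)=2$ into the singular adjunction formula (Theorem~\ref{t:singularadjunction}) gives
\[
(a-1)(b-1)=(d-1)(d-2),
\]
which is necessary but, for each fixed $a$, has infinitely many solutions, so it cannot be the whole story.

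The first extra ingredient is a \emph{semigroup distribution property}: because $C$ is rational of degree $d$ and unicuspidal, its cusp semigroup must be spread out, in the sense that $\#\bigl(S\cap[0,jd]\bigr)\ge\binom{j+2}{2}$ for every $0\le j\le d-2$ (with equality at the top, as adjunction forces). I would prove this either algebro-geometrically, by comparing the dimension $jd+1$ of $H^0(\mathbb{P}^1,\mathcal{O}(jd))$ with the jet conditions that a degree-$j$ plane curve imposes on $C$ at the preimage of the cusp under the normalisation of Theorem~\ref{t:holomorphicimage}, or topologically, by exploiting that $C$ is a smoothly (indeed holomorphically) embedded sphere in $\CP^2$, so that the cusp link bounds prescribed surfaces in $B^4$ and the $d$-invariant and $\nu^+$ obstructions of Ozsv\'ath--Szab\'o and Borodzik--Livingston apply. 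The second ingredient is Orevkov's logarithmic Bogomolov--Miyaoka--Yau inequality, applied to an orbifold structure on $\CP^2$ along $C$: this yields a genuine linear bound, essentially $d<3a$, on the degree in terms of the cusp multiplicity. (This is the one place we use that $C$ is complex, or at worst pseudo-holomorphic.)

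With both obstructions in hand the rest is delicate bookkeeping. Fix the multiplicity $a$: adjunction then expresses $b$ in terms of $d$, Orevkov's bound confines $d$ to finitely many values, and the semigroup inequalities --- completely explicit here, since the counting function of $\langle a,b\rangle$ is elementary --- together with $\gcd(a,b)=1$ pare the surviving pairs $(d,b)$ down to the list. Reorganising the analysis along the continued fraction expansion of $b/a$ (equivalently $d/a$) makes the Fibonacci numbers transparent: the semigroup inequalities are tightest precisely when that expansion converges as slowly as possible, forcing a pattern of $1$'s and $2$'s governed by the Fibonacci recursion, and outside that pattern they fail; the two sporadic triples $(3,22,8)$ and $(6,43,16)$ drop out of the same analysis as isolated extremal solutions. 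For the construction half, the first family is realised explicitly by $\{xz^{a}=y^{a+1}\}$, parametrised by $(s\sd t)\mapsto(s^{a+1}\sd st^{a}\sd t^{a+1})$, whose unique singular point has link $T(a,a+1)$, and the remaining curves are produced by iterating a quadratic Cremona transformation based at the cusp and at points infinitely near it, starting in each case from a low-degree member.

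The main obstacle is the semigroup distribution property at full strength: the adjunction constraint alone is far too weak, so ruling out the infinitely many spurious solutions genuinely requires new input --- either the geometry of adjoint linear systems with assigned infinitely near base points, or $4$-manifold gauge theory and Heegaard Floer homology. A secondary but still nontrivial difficulty is the endgame: proving that the Fibonacci families and the two sporadic triples are \emph{exactly} the solutions of the combined inequalities is an argument about extremal continued fractions, not a one-line estimate.
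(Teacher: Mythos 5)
The paper does not prove this theorem: it is quoted as a black box from~\cite{FdBLMHN}, so there is no in-house argument to compare yours against. Measured against the actual published classification, your outline identifies the correct architecture: the adjunction constraint $(a-1)(b-1)=(d-1)(d-2)$ coming from Theorem~\ref{t:singularadjunction}, a counting inequality comparing the semigroup $\langle a,b\rangle$ with the dimensions of the linear systems of degree-$j$ curves constrained at the cusp, a logarithmic Bogomolov--Miyaoka--Yau/Orevkov bound of the shape $d<\varphi^2a+O(1)$ (your ``essentially $d<3a$''), a continued-fraction endgame in which the Fibonacci families arise as the extremal solutions, and realisation of the list via $\{xz^a=y^{a+1}\}$ and Cremona transformations based at the cusp and its infinitely near points. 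That is faithful to Fern\'andez de Bobadilla--Luengo--Melle Hern\'andez--N\'emethi (whose original argument takes your algebro-geometric route for the semigroup inequality; the Heegaard Floer route came later with Borodzik--Livingston) and to Orevkov's constructions.

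That said, as written this is a roadmap rather than a proof. The two inputs you rightly flag as the heart of the matter --- the semigroup distribution property and the BMY-type degree bound --- are exactly the theorems that do all the work, and neither is proved here, nor even stated with the precision the endgame requires (you should fix whether you count $S\cap[0,jd]$ or $S\cap[0,jd)$ and for which range of $j$, since the extremal analysis is sensitive to off-by-one errors); likewise ``delicate bookkeeping'' compresses what is, in the original, most of the paper. One concrete payoff of running your adjunction check against the statement: for the family printed as $(F_k^2,F_{k+2}^2,F_{k+1})$ one has $(F_3^2-1)(F_5^2-1)=72\neq(F_4-1)(F_4-2)=2$, so the degree there must be $F_kF_{k+2}=F_{k+1}^2+1$ (which does satisfy $(d-1)(d-2)=72$ for $k=3$), and the sporadic pairs $(3,22)$ and $(6,43)$ have degrees $8$ and $16$ as you inferred. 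So: a correct plan, consistent with the literature, that even detects a typo in the statement --- but with the hard content outsourced.
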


\begin{rmk}
The question of classification of these objects has been studied also using Heegaard Floer homology~\cite{BL, Liu-classification, BCG1}, in the symplectic setup~\cite{GS1, GKutle}, and in the PL setup~\cite{AGLL}.
\end{rmk}

The number of branches is clearly an invariant of the singularity that only depends on its link. In fact, this is true (albeit not obvious) also for the Milnor number. Moreover, for a given $N$, there are finitely many topological types of singularities with $\mu_{(C,p)} + \beta_{(C,p)} \le N$.

A corollary of this fact and of the singular adjunction formula is the following.

\begin{cor}
Fix a degree $d$. There are only finitely many possible collections of singularities that appear as singularities of a curve of degree $d$.
\end{cor}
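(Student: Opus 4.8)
The plan is to combine the singular adjunction formula (Theorem~\ref{t:singularadjunction}) with the finiteness statement about singularity types that was just mentioned: for each fixed $N$ there are only finitely many topological types of singularity with $\mu_{(C,p)} + \beta_{(C,p)} \le N$. So I would first set up a global bound on the quantity $\sum_p (\mu_{(C,p)} + \beta_{(C,p)} - 1)$ in terms of $d$ alone, and then argue that only finitely many collections of singularities can satisfy such a bound.

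For the global bound, I would start from equation~\eqref{e:singadj}:
\[
\sum_p (\mu_{(C,p)} + \beta_{(C,p)} - 1) = \sum_k \chi_g(C_k) - 3d + d^2.
\]
The left-hand side is a sum of non-negative terms (each $\mu_{(C,p)} \ge 0$ and $\beta_{(C,p)} \ge 1$, and in fact $\mu_{(C,p)} + \beta_{(C,p)} - 1 \ge 1$ at a genuine singular point). The right-hand side must therefore be bounded: $\sum_k \chi_g(C_k) \le 2n \le 2d$ since each $\chi_g(C_k) \le 2$ and there are at most $d$ irreducible components (a degree-$d$ curve has at most $d$ components). Hence
\[
\sum_p (\mu_{(C,p)} + \beta_{(C,p)} - 1) \le 2d - 3d + d^2 = d^2 - d =: N(d),
\]
a bound depending only on $d$.

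Next, I would turn this into a finiteness statement for the collection of singularities. Since each singular point contributes $\mu_{(C,p)} + \beta_{(C,p)} - 1 \ge 1$, the bound $N(d)$ forces the number of singular points to be at most $N(d)$. Moreover, for each individual singular point we have $\mu_{(C,p)} + \beta_{(C,p)} \le N(d) + 1$, so by the quoted finiteness fact there are only finitely many possible topological types for each singular point. A collection of singularities of a degree-$d$ curve is thus a finite multiset (of size at most $N(d)$) whose entries lie in a fixed finite set of topological types, and there are only finitely many such multisets. This proves the corollary. I expect the only subtle point to be the bookkeeping at the very start --- making sure the inequality $\mu_{(C,p)} + \beta_{(C,p)} - 1 \ge 1$ holds precisely at the points one wants to call singular (a smooth point has $\mu = 0$, $\beta = 1$, contributing $0$, so it drops out harmlessly), and that $\chi_g(C_k) \le 2$ is used correctly; everything else is routine counting. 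No genuinely hard step is involved, since both main ingredients have already been established.
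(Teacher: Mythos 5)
Your proposal is correct and follows essentially the same route as the paper: rearrange the singular adjunction formula, bound $\sum_k \chi_g(C_k)$ above by $2d$, deduce $\sum_p(\mu_{(C,p)}+\beta_{(C,p)}-1)\le d^2-d$, and invoke the finiteness of singularity types with bounded $\mu+\beta$. The only cosmetic difference is that the paper notes each singular point contributes an even integer (hence at least $2$), giving the sharper bound of $\frac12(d^2-d)$ on the number of singular points, but your bound of $\ge 1$ per singular point suffices for finiteness.
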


\begin{proof}
Since the Milnor fibre is a compact, connected surface with boundary, we have $\mu_{(C,p)} + \beta_{(C,p)} - 1 \ge 0$. Moreover, this quantity vanishes if and only if the Milnor fibre of $(C,p)$ is a disc, which happens only when $p$ is a non-singular point of $C$, and is otherwise an even integer. That is to say, the right-hand side of~\eqref{e:singadj} is bounded from below by $3d-d^2$.

On the other hand, the left-hand side of~\eqref{e:singadj} is bounded from above by $2d$, which can be attained when $C$ has exactly $d$ components, each of which is a 2-sphere (that is, $\chi_g(C_k) = 2$ for each $k$).

So $\sum_{p\in\Sing(C)} (\mu_{(C,p)} + \beta_{(C,p)} - 1)$ is bounded from above by $d^2-d$. Since the quantity on the left-hand side is a sum of positive terms, $C$ can have at most $\frac12(d^2-d)$ singular points. (Note that this maximum is attained by a line arrangement with only double points.)

Since there are finitely many singularities with $\mu_{(C,p)} + \beta_{(C,p)} - 1 \le N$ for any given $N$, the statement now follows.
\end{proof}

The number of possible combinatorial types satisfying adjunction can be estimated, but it grows very fast. For instance, let us restrict to curves with singularities of type $T(2,2k+1)$. Then the number of combinatorial types of curves of degree $d$ with only singularities of type $T(2,2k+1)$ is $p_0 + p_1 + \dots + p_{\frac12(d-1)(d-2)}$ (exercise!). Here, $p_n$ is the number of unordered partitions of $n$, i.e. the number of ways of writing $n$ as a sum of positive integers. Hardy and Ramanujan~\cite{HardyRamanujan}, and independently Uspensky~\cite{Uspensky}, proved that $p_n \sim \frac1{4n\sqrt3}\exp\Big(\pi\sqrt{\frac{2n}{3}}\Big)$, so the growth is at least exponential in the degree.


\section{Blow-ups and singular curves}\label{s:blow-ups}

Let us consider the following complex surface:
\[
X = \{(x,y; s \sd t) \mid xt = ys\} \subset \C^2 \times \CP^1,
\]
where $\C^2$ has coordinates $(x,y)$ and $\CP^1$ has coordinates $(s \sd t)$.

One easily verifies that $X$ is a non-singular complex surface (exercise!). The projection $\C^2 \times \CP^1 \to \C^2$ restricts to a map $\pi\co X \to \C^2$. This map is rather curious: it is an isomorphism away from the preimage of the origin, and it contains all of $\{(0,0)\}\times \CP^1$ (which is projected down to a single point).

\begin{defn}
The pair $(X,\pi)$ is called the \emph{blow-up} of $\C^2$ at the origin. The preimage $\pi\inv(0,0)$ of the origin is called the \emph{exceptional divisor} of the blow-up.
\end{defn}

Often we will omit the map $\pi$ and just say that $X$ is the blow-up of $\C^2$ at the origin. Note that the restriction of the other projection endows $X$ with the structure of a $\C$-bundle over $\CP^1$, and in particular $X$ deformation-retracts onto $\CP^1$.

By choosing local coordinates around any point $x$ of a non-singular complex surface $S$, we can blow-up $S$ at $p$, and obtain a new surface $\tilde S$ that has an extra $\CP^1$ instead of $p$, but otherwise looks a lot like $S$.

\begin{prop}
The exceptional divisor of the blow-up $\tilde S$ of $S$ at a point is a smoothly embedded sphere of self-intersection $-1$. In particular, $\tilde S$ is diffeomorphic to $S \# \CPbar$.
\end{prop}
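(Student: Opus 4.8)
The plan is to work in the explicit local model $X = \{(x,y; s\sd t) \mid xt = ys\} \subset \C^2 \times \CP^1$ and verify the two assertions there, since blowing up a point of an arbitrary non-singular surface $S$ is by definition this construction performed in a coordinate chart, glued to $S \setminus \{p\}$ along the biholomorphism $\pi$.

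First I would set up the two standard affine charts on $X$. On $\{s \neq 0\}$ we may normalise $s = 1$, and the equation $xt = y$ shows that $(x,t)$ are global coordinates on this chart, with $\pi(x,t) = (x, xt)$; similarly on $\{t \neq 0\}$ we get coordinates $(y, u)$ with $s = u$, $\pi(y,u) = (yu, y)$. In the first chart the exceptional divisor $E = \pi\inv(0,0)$ is the line $\{x = 0\}$, and in the second it is $\{y = 0\}$; the two descriptions agree on the overlap, and one reads off that $E \cong \CP^1$ is smoothly (indeed holomorphically) embedded. So the first part of the claim — that $E$ is a smoothly embedded sphere — is immediate from the charts.

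Next I would compute the self-intersection $E \cdot E$. The cleanest way is to take a section of the normal bundle $\nu_E$ and count its zeros. Concretely, consider the curve $\{y = c\}$ in $\C^2$ for a small constant $c \neq 0$; its proper transform $\tilde L_c$ (the closure of $\pi\inv(\{y = c\} \setminus 0)$) is disjoint from $E$, while its limit as $c \to 0$ is the proper transform $\tilde L_0$ of $\{y = 0\}$, which meets $E$ transversely in exactly one point (in the first chart, $\tilde L_0 = \{t = 0\}$ meets $E = \{x = 0\}$ at the origin). Thus a generic such family gives a section of $\nu_E$ vanishing once transversally, but with the opposite orientation convention one finds $E \cdot E = -1$; the sign is pinned down by noting that $\pi$ contracts $E$, so $E$ cannot have non-negative self-intersection in the blow-up (alternatively, compare with the local intersection calculation in the chart coordinates directly). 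This is the step I expect to be the main obstacle: not the computation itself, which is short, but getting the sign right and phrasing the normal-bundle-via-section argument cleanly without circular appeals.

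Finally, for the diffeomorphism $\tilde S \cong S \# \CPbar$, I would argue as follows. The blow-up $\tilde S$ is obtained from $S$ by removing a small ball $B^4$ around $p$ and gluing back the corresponding piece of the local model $X$, namely $X_0 := \pi\inv(B^4_\epsilon)$, along $S^3 = \partial B^4$ via $\pi$. It therefore suffices to identify $X_0$ with the complement of a ball in $\CPbar$: indeed $X_0$ is a tubular neighbourhood of $E \cong S^2$ with the $\CP^1$-bundle structure described after the definition of the blow-up, i.e. it is the disc bundle over $S^2$ of Euler number $-1$, which is precisely $\CPbar \setminus B^4$ (the exceptional sphere being the generator of $H_2(\CPbar)$ with square $-1$). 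Matching the gluing maps — both are the identity on $S^3$ up to isotopy, by the standard fact that orientation-preserving diffeomorphisms of $S^3$ are isotopic to the identity — gives $\tilde S \cong S \# \CPbar$. I would remark that orientations must be tracked throughout so that the connect-summand is $\CPbar$ and not $\CP^2$; this is exactly where the sign $E \cdot E = -1$ re-enters.
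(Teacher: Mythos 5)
The chart set-up and the final connected-sum step are fine (your gluing discussion is, if anything, more careful than the paper's sketch), but the self-intersection computation --- which you correctly identify as the crux --- has a genuine gap. The family $\{y=c\}$ does not give a section of $\nu_E$: for $c\neq 0$ the proper transform $\tilde L_c=\{(x,c;x\sd c)\}$ is disjoint from $E$ altogether, and as $c\to 0$ it is a push-off of $\tilde L_0$, not of $E$. So this family computes (at best) a relative self-intersection of the non-compact curve $\tilde L_0$ and says nothing about $E\cdot E$. Worse, your fallback for the sign --- ``$\pi$ contracts $E$, so $E$ cannot have non-negative self-intersection'' --- is not available at this stage: that a holomorphically contractible curve has negative self-intersection is (a special case of) Grauert's criterion, and here it is essentially the statement being proved. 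Unimodularity of the intersection form of $\pi\inv(B)\cup B^4$ does pin down $E\cdot E=\pm1$ cheaply, but not the sign.

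Two honest repairs. (i) The paper's route: compactify, i.e.\ blow up $\CP^2$ at a point and take the proper transforms $\tilde\ell_1,\tilde\ell_2$ of two \emph{distinct lines through that point}; these are disjoint, homologous, and each meets a generic line once, so writing $[\tilde\ell_i]=h+be$ gives $0=[\tilde\ell_1]\cdot[\tilde\ell_2]=1+b^2(e\cdot e)$, forcing $e\cdot e=-1$. Note this uses lines through the blown-up point, whose proper transforms actually hit $E$, rather than your parallel family. (ii) The normal-bundle route you were aiming for: identify $\nu_E$ with the restriction to the zero section of the line bundle $X\to\CP^1$, read off the transition function $y=xt$ between your two charts to see it is the tautological bundle $\mathcal{O}(-1)$, and compute its degree from a meromorphic section with a single simple pole --- it has no nonzero holomorphic sections, which is exactly why the ``section vanishing once transversally'' you invoke cannot exist holomorphically and why the sign comes out negative. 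Either repair is short, but as written your argument does not establish $E\cdot E=-1$.
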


Before going into the proof, let us look more closely at $X$. Take a curve $C \subset \C^2$. If $C$ does not contain the origin, then we can lift $C$ to $X$ by simply taking the preimage $\pi\inv(C) \subset X$, and the restriction of $\pi$ induces an isomorphism $\pi\inv(C) \to C$. If $C$ contains the origin, on the other hand, $\pi\inv(C)$ always contains the exceptional divisor $E$. Let us do something a bit different: look at the closure of $\pi\inv(C) \setminus E$ inside $X$. The latter is called the \emph{strict transform} (or sometimes \emph{proper transform}) of $C$ in $X$, while $\pi\inv(C)$ is called the \emph{total transform} of $C$ in $X$.

\begin{exe}\label{exe:lifting-line}
Show that if $\ell \subset \CP^2$ is a line and $p\in \ell$ is a point, the strict transform of $\ell$ in the blow-up of $\CP^2$ meets the exceptional divisor transversely at one point\footnote{Hint: explicitly lift the curve in a chart centred at a convenient point of the exceptional divisor.}.
\end{exe}

\begin{exe}
Let $C \subset \CP^2$ be the curve $C = \{x^pz^{q-p} - y^q = 0\}$, where $0 < p < q$ and $\gcd(p,q) = 1$. Show that the strict transform $C$ in the blow-up of $\CP^2$ at $(0\sd 0 \sd 1)$ has a singularity of type $\{x^p - y^{q-p} = 0\}$.
\end{exe}

\begin{proof}[Sketch of proof]
The blow-up of $S$ at a point replaces an open ball neighbourhood of a point with the surface $X$ from above, so it is the connected sum of $S$ with \emph{something}, and the homology groups of this \emph{something} are the same as those of $\CP^2$ (why?). In particular, $H_2(\tilde S) = H_2(S) \oplus \Z$, where the splitting is orthogonal, and it is given by:
\[
\xymatrix{
H_2(S) \oplus \Z \ar[r]^-{\sim} & H_2(S\setminus \{p\}) \oplus \Z\ar[r] & H_2(\tilde S)\\
(C, b)\ar@{|-{>}}[r] & (\iota\inv(C), b) \ar@{|-{>}}[r] & \tilde\iota\iota\inv(C) + b[E].
}
\]

Here $\iota\co H_2(S\setminus\{p\}) \to H_2(S)$ and $\tilde\iota\co H_2(S\setminus\{p\}) \to H_2(\tilde S)$ are the maps induced by the inclusion of $S\setminus\{p\}$ into $S$ and $\tilde S$, respectively.

Let us blow up $\CP^2$ at the point $(0\sd 0 \sd 1)$, to obtain the surface $T$. Call $E$ the exceptional divisor of the blow-up. Let us look at the proper transforms of the two lines $\ell_1 = \{x = 0\}$ and $\ell_2 = \{y = 0\}$, in $\C^2$. The proper transforms of $\ell_1$ and $\ell_2$ are $\tilde\ell_1  = \{(0,t;0\sd 1) \mid t \in \C\}$ and $\tilde \ell_2 = \{(t,0;1\sd 0) \mid t \in \C\}$ (exercise!). In particular, $\tilde\ell_1$ and $\tilde\ell_2$ are disjoint and they each intersect $E$ transversely once (exercise!).

Let us write $[\tilde\ell_1] = ah - be$, where $h, e\in H_2(T) \cong \Z^2$ form the orthogonal basis described above (and $h$ is the homology class of a complex line in $\CP^2$). Since $\tilde\ell_2$ is isotopic to $\tilde\ell_1$ (by lifting any isotopy between $\ell_1$ and $\ell_2$ through lines containing $(0\sd 0\sd 1)$), $[\tilde\ell_2] = [\tilde\ell_1] = ah - be$. Moreover, $\tilde\ell_1$ and $\tilde\ell_2$ are disjoint, so
\[
0 = [\tilde\ell_2]\cdot[\tilde\ell_1] = (ah-be)^2 = a^2 + b^2(e\cdot e)
\]
Since both $\ell_1$ and $\ell_2$ meet every line not passing through the origin transversely and positively once (because they do in $\CP^2$), then
\[
1 = [\ell_1]\cdot h = (ah+be)\cdot h = a,
\]
so $a = 1$. It follows that
\[
0 = a^2 + b^2(e\cdot e) = 1 + b^2(e\cdot e),
\]
so $e\cdot e = -1$ and\footnote{In fact, since $\tilde\ell_1$ and $E$ intersect transversely and positively once, $b=-1$.} $b^2 = 1$.
\end{proof}

\begin{rmk}
The one presented above is \emph{not} the ``good'' proof of the statement, but it's a hands-on and rather elementary proof. A better proof would involve computing the first Chern class of the bundle $X\to \CP^1$ described above: this bundle is dual to a bundle which has a section with one zero (of multiplicity $1$), which proves the first half of the statement.
We chose to stick to this line of proof since it only uses considerations in 4-dimensional topology and avoids the use of characteristic classes.
\end{rmk}

\begin{rmk}
Note that, since $e\cdot e = -1$, there can be no holomorphic perturbation of $E$ within $\tilde S$: the exceptional divisor is a \emph{rigid} complex curve.
\end{rmk}

\begin{defn}
Let $S$ be a complex surface and $C, D \subset S$ be two curves.
\begin{itemize}
\item Pick a point $p \in C$, blow up $S$ at $p$, and call $E$ the exceptional divisor of the blow-up.
The \emph{multiplicity} of $C$ at $p$ is the multiplicity of intersection of $E$ with the strict transform of $C$.

\item Pick a point $p \in C \cap D$. The \emph{local multiplicity of intersection} $(C\cdot D)_p$ of $C$ and $D$ at $p$ is defined as follows.
Choose a small round neighbourhood $B$ of $p$ such that $(B, C\cup D \cap B)$ is the cone over the link of $C\cup D$ at $p$ and a deformation $\{C_t\}_{t\in [0,1]}$ of $C$ such that $C_0 = C$, $C_t$ is transverse to $\del B$ for every $t$, and $C_1$ and $D$ intersect transversely (at non-singular points of $C_1$ and of $D$).
Then we define $(C\cdot D)_p$ as the number of intersections between $C_1$ and $D$ inside $B$.
\end{itemize}
\end{defn}

\begin{exe}
The local multiplicity of intersection $(C\cdot D)_p$ has a 3-dimensional interpretation in the following terms.
Consider the link of $C\cup D$ at $p$: this is an oriented link $L$ in $S^3$, which is partitioned into two sub-links $L = L_C \cup L_D$, corresponding to the components of $C$ and of $D$.
Then $(C\cdot D)_p = \lk(L_C, L_D)$, where the linking number here is the total linking number (that is, we sum all linking numbers between pairs of components, one from $L_C$ and one from $L_D$).
\end{exe}

\begin{prop}
The multiplicity of $C$ at a point $p$ is the local multiplicity of intersection of a generic line through $p$ with $C$. In particular, the multiplicity of $C$ at a non-singular point is $1$.
\end{prop}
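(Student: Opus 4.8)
The plan is to choose a sufficiently generic line $L$ through $p$ and to compare the intersection of $L$ with $C$ downstairs in $\CP^2$ with the intersection of their strict transforms upstairs in the blow-up $\pi\co T\to\CP^2$ at $p$. Since $(L\cdot C)_p$ is defined locally, working in $\CP^2$ (rather than an abstract surface $S$, where ``lines'' do not make sense) loses nothing. Write $d=\deg C$, let $E\subset T$ be the exceptional divisor, and let $m=m_p(C)=(E\cdot\tilde C)$ be the multiplicity of $C$ at $p$. First I would impose two genericity conditions on $L$: (i) the single transverse point $\tilde L\cap E$ (which exists by Exercise~\ref{exe:lifting-line}) is not one of the finitely many points of $\tilde C\cap E$ -- equivalently, $L$ is not tangent to any branch of $C$ at $p$; and (ii) $L$ meets $C$ transversely at every point other than $p$. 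Both conditions hold for all but finitely many lines through $p$: (i) rules out finitely many tangent directions at $p$, and (ii) rules out the finitely many lines through $p$ that are tangent to $C$ somewhere else or pass through a point of $\Sing(C)$.

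Next I would carry out the homology computation in $T$. With $H_2(T)=\Z h\oplus\Z E$, $h\cdot h=1$, $E\cdot E=-1$, $h\cdot E=0$ as in Section~\ref{s:blow-ups} (here $h$ is the class of a line of $\CP^2$ missing $p$), one has $[\tilde C]\cdot h=d$ and $[\tilde C]\cdot E=m$, hence $[\tilde C]=dh-mE$; similarly $[\tilde L]\cdot h=1$ and $[\tilde L]\cdot E=1$ by Exercise~\ref{exe:lifting-line}, so $[\tilde L]=h-E$. Therefore $[\tilde L]\cdot[\tilde C]=(h-E)\cdot(dh-mE)=d-m$. By condition (i) the complex curves $\tilde L$ and $\tilde C$ are disjoint along $E$, so all of their intersections occur in $T\setminus E$, which $\pi$ identifies biholomorphically with $\CP^2\setminus\{p\}$; under this identification $\tilde L\cap\tilde C$ corresponds exactly to $(L\cap C)\setminus\{p\}$, and by condition (ii) these points are transverse, hence positive intersections of complex curves, so there are precisely $[\tilde L]\cdot[\tilde C]=d-m$ of them. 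Finally, the multiplicity version of B\'ezout's theorem gives $\sum_{q\in L\cap C}(L\cdot C)_q=[L]\cdot[C]=d$; the $d-m$ points $q\neq p$ each contribute $1$ (a transverse intersection at a smooth point), so $(L\cdot C)_p=d-(d-m)=m$. The ``in particular'' is then immediate: at a non-singular point $p$ the curve $C$ has a single branch with a single tangent direction, so a generic line through $p$ meets $C$ transversely at $p$ and $(L\cdot C)_p=1$, whence $m_p(C)=1$.

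The step that needs the most care is the additivity $\sum_{q\in L\cap C}(L\cdot C)_q=[L]\cdot[C]$, since the version of B\'ezout stated earlier assumes global transversality, which a line through $p$ does not enjoy. I would deduce it by perturbing $C$ to a nearby curve $C'$ in the same homology class with $C'$ transverse to $L$; for a small enough perturbation the $d$ points of $C'\cap L$ cluster in disjoint small balls about the points of $C\cap L$, with exactly $(L\cdot C)_q$ of them near each $q$ by the definition of the local multiplicity, and $\#(C'\cap L)=[C']\cdot[L]=d$ by the transverse case. One should also check that the biholomorphism $T\setminus E\cong\CP^2\setminus\{p\}$ matches $\tilde L\cap\tilde C$ with $(L\cap C)\setminus\{p\}$ and preserves transversality, but this is immediate since $\pi$ restricts to an isomorphism there and $\tilde L$, $\tilde C$ are, away from $E$, exactly the preimages of $L\setminus\{p\}$, $C\setminus\{p\}$. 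A purely local alternative -- write $f=f_m+f_{m+1}+\cdots$ in a chart at $p$ with $f_i$ homogeneous of degree $i$ and $f_m\neq0$, observe that $f$ restricted to a line $t\mapsto tv$ is $t^m(f_m(v)+O(t))$, of vanishing order exactly $m$ for generic $v$, and that $(E\cdot\tilde C)=\deg f_m=m$ in the standard chart -- avoids B\'ezout but requires redoing the local model of the blow-up, so I would keep the homological argument as the main line.
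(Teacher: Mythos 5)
Your proof is correct, and at its core it is the same argument as the paper's: blow up at $p$, use that $[\tilde L]=h-E$ and $[\tilde C]=dh-mE$, and use genericity of $L$ to guarantee that $\tilde L$ and $\tilde C$ are disjoint along $E$. The difference is purely in the bookkeeping. The paper localises the intersection pairing to a neighbourhood $N$ of $E$ and reads off $0=(\tilde\ell\cdot\tilde C)_N=(\ell\cdot C)_p-m$ directly, so it never needs a global count; the price is having to make sense of the somewhat informal ``localised'' pairing $(\cdot)_N$. You instead compute the global number $[\tilde L]\cdot[\tilde C]=d-m$, identify it with the transverse intersections of $L$ and $C$ away from $p$, and then recover $(L\cdot C)_p=d-(d-m)$ by B\'ezout \emph{with local multiplicities} -- a statement strictly stronger than the weak B\'ezout stated in the paper, which you rightly flag and justify by a perturbation argument (this is fine, and is essentially forced on you by the paper's deformation-based definition of $(C\cdot D)_p$). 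So your route trades the localisation step for a global B\'ezout step; both are legitimate, and your closing remark about the purely local computation via the leading form $f_m$ is also a valid third option. One small point worth making explicit either way: the identification $[\tilde C]\cdot h=d$ uses that $h$ is represented by the total transform of a line missing $p$, which meets $\tilde C$ exactly where that line meets $C$.
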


\begin{exe}
Show that this definition of multiplicity agrees with the one given in Section~\ref{s:curves} above for singular points of line arrangements.
\end{exe}

\begin{proof}
Consider the proper transform $\tilde C$ of $C$ and the proper transform $\tilde\ell$ of a generic line $\ell$ through $p$. From Exercise~\ref{exe:lifting-line}, we know that $\tilde \ell$ meets the exceptional divisor $E$ transversely at a single point. Before proving the claim, let us see how it helps us conclude the proof.

Since $\tilde \ell$ meets $E$ transversely at a point, $[\tilde \ell] = [\ell] - E$ (why?). On the other hand, by definition of multiplicity, $[\tilde C] = [C] - mE$, where $m$ is the multiplicity of $C$ at $p$. Now, since $\tilde C$ intersects $E$ at finitely many points and $\ell$ was chosen to be generic, $\tilde\ell$ and $\tilde C$ do not meet along $E$. Therefore, the $\tilde\ell$ and $\tilde C$ are disjoint in a neighbourhood $N$ of $E$, so their intersection number within this neighbourhood, which we denote by $(\tilde\ell \cdot \tilde C)_N$, vanishes. Thus we have:
\[
0 = (\tilde\ell \cdot \tilde C)_N = ((\ell - E)\cdot(C - mE))_N = (\ell\cdot C)_p + m(E^2)_N = (\ell\cdot C)_p - m,
\]
from which the statement follows.
\end{proof}

\begin{exe}
Let $C \subset \CP^2$ be the curve $C = \{x^pz^{q-p} - y^q = 0\}$, where $0 < p < q$ and $\gcd(p,q) = 1$. Compute the multiplicity of $C$ at $(0\sd 0 \sd 1)$ and at $(1\sd 0 \sd 0)$.
\end{exe}

\section{Branched covers: dimension 4}\label{s:Fano}

In this section we will consider the Fano arrangement, represented schematically in Figure~\ref{f:Fano}.

\begin{figure}
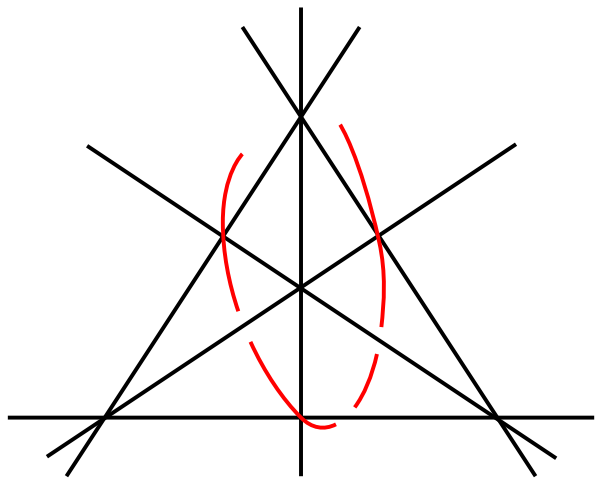

\centering
\mfig{fano}
\caption{The seven lines of the Fano configuration. The red line is one single line, even though it is drawn as a curve.}\label{f:Fano}
\end{figure}

\begin{defn}
The \emph{Fano arrangement} is the arrangement of seven lines meeting at seven triple points.
\end{defn}

The Fano arrangement is realised over the field $\F_2$ with two elements: the projective plane $\F_2\P^2$ over $\F_2$ contains seven points and seven lines, and each point is contained in exactly three lines (since $\F_2\P^1$ has cardinality 3).

The main result of this section is the following non-existence result.

\begin{thm}\label{t:Fano}
There exist no complex line arrangement with the combinatorial type of the Fano arrangement.
\end{thm}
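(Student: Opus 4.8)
The strategy is to take a (hypothetical) complex line arrangement $L = L_1 \cup \dots \cup L_7$ with the Fano combinatorics, blow up the seven triple points, and build a branched cover of the resulting rational surface to derive a contradiction from the signature. Concretely, I would proceed as follows. First, blow up $\CP^2$ at the seven triple points $p_1, \dots, p_7$ of $L$ to obtain $X \cong \CP^2 \# 7\CPbar$, with exceptional divisors $E_1, \dots, E_7$. Since each line $L_i$ passes through exactly three of the triple points (each point lying on exactly three lines, and $7 \cdot 3 = 7 \cdot 3$), the strict transform $\tilde L_i$ has homology class $h - E_{j_1} - E_{j_2} - E_{j_3}$ for the three points on $L_i$; in particular each $\tilde L_i$ is a smooth rational curve with $\tilde L_i \cdot \tilde L_i = 1 - 3 = -2$, and the $\tilde L_i$ are pairwise disjoint (the only intersections of the lines were at the triple points, which have been separated by the blow-up). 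So $\tilde L := \bigcup_i \tilde L_i$ is a smooth (non-singular, disconnected) curve in $X$.

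Next I would compute the total homology class $[\tilde L] = \sum_i (h - \sum_{p_j \in L_i} E_j) = 7h - 3\sum_{j=1}^7 E_j$, using that each point lies on exactly three lines. The key numerical observation is that this class is divisible by $2$ if and only if... it is not: $7h - 3\sum E_j$ has all odd coefficients, so it is not $2$-divisible in $H_2(X;\Z)$. To fix this I would instead work with $[\tilde L]$ modulo $2$ and look for a class $\ell$ with $2\ell \equiv [\tilde L]$, or — more robustly — pass to the arrangement together with an auxiliary generic line. The cleanest route: observe $7h - 3\sum E_j \equiv h + \sum E_j \pmod 2$, and $h + \sum E_j$ is not characteristic, but I can take the double branched cover $\Sigma_2(X, \tilde L)$ only after correcting the class — so instead I would form the strict transform curve and note that $[\tilde L]$ is divisible by... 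Here is where the real content lies, and I expect this to be the main obstacle: arranging the mod-$2$ homology so that a double branched cover of $X$ exists with branch locus (homologous to) $\tilde L$.

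The way the obstruction is actually extracted is via the $G$-signature theorem / Hirzebruch's formula for the signature of a double branched cover: if $Y \to X$ is the double cover branched along a smooth surface $F$ with $[F] = 2\ell$, then $\sigma(Y) = 2\sigma(X) - \tfrac12 [F]^2 = 2\sigma(X) - 2\ell^2$. One then also computes $\chi(Y)$ and $b_1(Y)$ via the branched-cover Euler characteristic formula and the fact (from Theorem~\ref{t:homology} and Theorem~\ref{t:nodalcurves}-type considerations, since $\tilde L$ is a disjoint union of spheres) that $Y$ is simply connected. Comparing $\sigma(Y)$, $b_2(Y)$, and the constraint $|\sigma(Y)| \le b_2(Y)$ — or, sharper, feeding $Y$ into the fact that its intersection form must be realized by a smooth closed $4$-manifold and checking against Rokhlin's theorem / the parity of $\sigma$ — yields a numerical contradiction. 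So the skeleton of key steps is: (i) blow up the seven points; (ii) identify the strict transforms as $7$ disjoint $(-2)$-spheres and compute $[\tilde L] = 7h - 3\sum E_j$; (iii) arrange for a double (or suitable cyclic) branched cover $Y \to X$ along a surface homologous to $\tilde L$, after possibly modifying the branch class by a $2$-torsion-free correction; (iv) compute $\sigma(Y)$ via Hirzebruch's signature formula for branched covers and $b_2(Y), b_1(Y)$ via Euler characteristic and simple-connectivity; (v) derive a contradiction with $|\sigma(Y)| \le b_2(Y)$ (or with Rokhlin's theorem on the spin $4$-manifold obtained).

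\textbf{Where the difficulty lies.} The delicate point is step (iii): $[\tilde L] = 7h - 3\sum E_j$ is not itself $2$-divisible, so a naive double cover branched along $\tilde L$ does not exist, and one must either (a) build a $\Z/2$-cover of $X \setminus \tilde L$ corresponding to the right element of $H^1(X \setminus \tilde L; \Z/2)$ — which exists precisely because $[\tilde L] \equiv 0 \pmod 2$ in $H_2(X \setminus (\text{pt}); \cdot)$ after the right bookkeeping, the point being that the $7$ classes $h - E_{j_1} - E_{j_2} - E_{j_3}$ are each reductions of classes whose sum, divided appropriately, lands in the lattice — or (b) add an eighth curve (a generic line, class $h$) to make the total class $8h - 3\sum E_j$, still not even, so instead add two generic lines to get $9h - 3\sum E_j = 3(3h - \sum E_j)$ and take a $3$-fold cyclic cover, then apply the $G$-signature theorem for $\Z/3$-actions. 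I would pursue the $3$-fold cover: $3h - \sum E_j$ is a primitive class with $(3h - \sum E_j)^2 = 9 - 7 = 2 > 0$, the curve $\tilde L$ together with two generic lines is a disjoint smooth curve in class $3(3h-\sum E_j)$, and the triple cyclic cover branched over it has a computable signature; the arithmetic of $2\sigma(X) = -12$ against the cover's invariants is where the contradiction must surface. Getting the branch class genuinely realized by an embedded smooth surface (not just a homology class) is automatic here since $\tilde L$ plus generic lines is already embedded and smooth, so the only real work is the signature bookkeeping.
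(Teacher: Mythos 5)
Your setup is exactly right and matches the paper's topological proofs up to the point where you yourself flag "the main obstacle": blow up the seven triple points, obtain seven pairwise disjoint $(-2)$-spheres with total class $7h-3\sum e_j$, and look for a branched-cover obstruction. But that is precisely where the argument breaks, and your proposed repair does not work. Adding generic lines does not produce a disjoint smooth branch locus: a generic line meets each of the seven original lines once away from the triple points, so its strict transform (class $h$) intersects every $\tilde L_i$ transversely, and two generic lines also meet each other; the union is an immersed curve with $15$ nodes, so your claim that "$\tilde L$ plus generic lines is already embedded and smooth" is false. Even after smoothing the nodes to get an embedded surface in the class $9h-3\sum e_j$ and taking the triple cyclic cover, the numbers do not give a contradiction: Hirzebruch's formula yields $\sigma(Y)=3(-6)-\tfrac{8}{9}\cdot 18=-34$, while $\chi(Y)=3\cdot 10-2\chi(F)=54$ (the smoothed branch surface has $\chi=-12$), so $|\sigma(Y)|=34\le 52=b_2(Y)$, and Rokhlin does not apply since the intersection form of $Y$ is not shown to be even. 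A side error: $h+\sum e_j$ \emph{is} characteristic --- in the diagonal basis $h,e_1,\dots,e_7$ the characteristic classes are exactly those with all coefficients odd.

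The two ideas that actually close the argument, both of which you circled around without landing on, are these. (i) Do not branch over all seven spheres: branch over the four lines avoiding a fixed point of the Fano plane, whose total class $4h-2(e_1+e_2+e_3+e_4+e_5+e_7)$ \emph{is} divisible by $2$. The remaining three spheres are disjoint from the branch locus and each lifts to two disjoint $(-2)$-spheres, giving $4+6=10$ pairwise disjoint negative classes in a double cover which one computes (via the Euler characteristic and a finiteness argument for $H_1$) to have $b_2=10$; this forces $b_2^+=0$, contradicting the fact that the preimage of a generic line has square $+2$. (ii) Alternatively, forget branched covers entirely: tube the seven $(-2)$-spheres into a single embedded sphere in the class $7h-3\sum e_j$, note that this class is characteristic, and invoke the Kervaire--Milnor congruence $T\cdot T\equiv\sigma(Y)\pmod{16}$, which fails since $-14\not\equiv-6\pmod{16}$. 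Your computation of $[\tilde L]$ put this second route directly in your hands.
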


We will present three proofs (of many that are available). The first is folklore, and works in the complex category (or, in fact, over any field of characteristic different than 2), while the other two work for surfaces with conical singularities.

\begin{proof}[A sketch of the rigid proof of Theorem~\ref{t:Fano}]
Let $\F$ be a field and suppose that the Fano arrangement is realised in $\F\P^2$. Up to the action of $\Aut(\F\P^2) = PGL(3,\F)$, we can suppose that the points labelled $1$, $2$, $4$, and $6$ in Figure~\ref{f:Fano-with-labels} are sent to $(0\sd 0 \sd1)$, $(0 \sd 1 \sd 0)$, $(1 \sd 0 \sd 0)$, and $(1 \sd 1 \sd 1)$, respectively. The six lines passing through all possible pairs among these four points are determined by the equations $x=0$, $y=0$, $z=0$, $x=y$, $y=z$, and $z=x$. Therefore, the remaining three intersection points have coordinates $(1\sd 1\sd 0)$, $(1\sd 0\sd 1)$, and $(0\sd 1\sd 1)$. These points simultaneously satisfy the linear equation $ax+by+cz = 0$ if and only if $a+b=b+c=c+a=0$, which has a non-zero solution (in $a$, $b$, $c$) if and only if the characteristic of $\F$ is 2.
\end{proof}

The proof above shows something stronger than the statement: the only fields over which the Fano line arrangement can be realised are those of characteristic 2.

We want to give two topological proofs of Theorem~\ref{t:Fano}. For the first, we will need to generalise the notion of branched covers of surfaces (Definition~\ref{d:bc2}) to higher dimension (and especially to dimension 4). We will restrict to smooth branched covers whose branch locus is smoothly embedded, but the definition can be extended way beyond this setup~\cite{Fox-coverings}.

\begin{defn}
Let $X$ and $Y$ be two compact, oriented $n$-manifolds, and $p\co X\to Y$ a smooth map. $p$ is called a \emph{branched cover} if there exists a smooth codimension-2 submanifold $B \subset Y$ such that, calling $X' = X \setminus p\inv(B)$, $Y' = Y \setminus B$, $p'\co X' \to Y'$ the restriction of $p$ to $X'$ (with the restriction on its codomain, too):
\begin{itemize}\itemsep 0pt
\item $p'$ is a covering map (i.e. a local homeomorphism);
\item for every point $x \in p\inv(B)$, there are coordinates $B^{n-2} \times \Delta$ around $x$ and $B^{n-2}\times \Delta$ around $p(x)$ in which $p\inv(B)$ is $U\times\{0\}$, $B$ is $V\times\{0\}$, and $p$ is $\id \times \mu_{n_x}$.
\end{itemize}
$B$ is called the \emph{branching set}, and $p'$ is the \emph{cover} associated to $p$. We also say, by a slight abuse of terminology, that $X$ is a branched cover of $Y$, or that $X$ is a cover of $Y$ branched over $B$. $n_x$ is called the \emph{index of ramification} of $p$ at $x$. (Note that for every point in $X'$ we have $n_x = 1$, and that there could be points in $p\inv(B)$ for which $n_x = 1$.) The \emph{degree} of $p$ is the degree of $p'$, i.e. the number of points in the preimage of any point in $Y'$. We will assume that for every $y \in B$ there exists $x\in p\inv(B)$ with $n_x > 1$.
\end{defn}

We will look at the special case of \emph{double} covers, i.e. those whose degree is 2. In this case, the restriction of $p$ to $p\inv(B)$ is a diffeomorphism onto $B$. Similarly to the Riemann--Hurwitz formula, we can compute the Euler characteristic of $X$ in terms of those of $Y$ and $B$.

\begin{prop}\label{p:chi}
If $X$ and $Y$ are closed $n$-manifolds and $p\co X\to Y$ is a double cover branched over $B$, then
\[
\chi(X) = 2\chi(Y) - \chi(B).
\]
\end{prop}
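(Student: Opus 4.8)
The plan is to decompose $X$ and $Y$ into two pieces each — the branch locus (together with a tubular neighbourhood) and its complement — and compare Euler characteristics piece by piece, exactly as in the sketch of the Hurwitz formula given earlier in Section~\ref{s:adjunction}. Since $B \subset Y$ is a smoothly embedded codimension-$2$ submanifold, let $N$ be a closed tubular neighbourhood of $B$ in $Y$, which is a disc bundle over $B$; write $Y = N \cup (Y \setminus \Int N)$, with $N \cap (Y\setminus\Int N) = \partial N$ the associated circle bundle. Pulling back, let $\tilde N = p\inv(N)$ and $X = \tilde N \cup (X \setminus \Int \tilde N)$.

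First I would handle the unbranched part. By definition the restriction $p' \co X \setminus p\inv(B) \to Y \setminus B$ is an honest degree-$2$ covering, so $\chi(X \setminus p\inv(B)) = 2\chi(Y\setminus B)$. Since $B$ is a deformation retract of $N$, we have $\chi(Y\setminus B) = \chi(Y) - \chi(N) = \chi(Y) - \chi(B)$; and since (being a double cover) the restriction $p|_{p\inv(B)} \co p\inv(B) \to B$ is a diffeomorphism, $p\inv(B)$ is a copy of $B$ with the same Euler characteristic. So $\chi(X) = \chi(X\setminus p\inv(B)) + \chi(p\inv(B))$ by additivity of Euler characteristic for such a decomposition (the two open pieces glue along the total space of the sphere bundle $\partial\tilde N$, which has Euler characteristic $0$ since its fibre $S^1$ does), giving $\chi(X) = 2(\chi(Y)-\chi(B)) + \chi(B) = 2\chi(Y) - \chi(B)$, as desired.

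The one point that deserves care — and which I expect to be the main (mild) obstacle — is justifying the additivity formula $\chi(A\cup C) = \chi(A) + \chi(C) - \chi(A\cap C)$ in this setting and checking that $\chi(\partial N) = 0$. The latter is the standard fact that a fibre bundle with fibre $F$ has Euler characteristic $\chi(F)\chi(\text{base})$, applied to the circle bundle $\partial N \to B$; here $\chi(S^1)=0$ kills the term. For additivity one can either work with a CW or triangulated model (choosing a triangulation of $Y$ in which $B$, $N$, and $\partial N$ are subcomplexes, and lifting it through $p$, which is possible because $p$ is a branched cover in the sense of the definition above), or invoke the inclusion–exclusion property of compactly-supported Euler characteristic. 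Either way the computation is routine once the decompositions are set up; no characteristic-class input is needed, paralleling the elementary spirit of the Hurwitz-formula proof.
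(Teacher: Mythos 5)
Your proof is correct and follows essentially the same route as the paper's: decompose along a tubular neighbourhood of $B$, use multiplicativity of $\chi$ on the honest double cover of the complement, additivity across the circle-bundle boundary (which has vanishing Euler characteristic), and the fact that $p$ restricts to a diffeomorphism $p\inv(B)\to B$. The only cosmetic difference is that the paper separates the odd- and even-dimensional cases, whereas your circle-bundle argument handles both at once.
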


\begin{proof}
If $n$ is odd, then $B$ is also odd-dimensional and we have $\chi(X) = \chi(Y) = \chi(B) = 0$, so the formula holds.

If $n$ is even, the boundary of a tubular neighbourhood of $B$ in $Y$ and of $p\inv(B)$ in $X$ is odd-dimensional, so its Euler characteristic vanishes\footnote{It also vanishes because it is a circle bundle.}. By additivity of the Euler characteristic, $\chi(X) = \chi(X') + \chi(p\inv(B)) = \chi(X') + \chi(B)$ (since $p$ is a diffeomorphism between $p\inv(B)$ to $B$) and $\chi(Y) = \chi(Y') + \chi(B)$.

By multiplicativity of the Euler characteristic, $\chi(X') = 2\chi(Y')$. Putting everything together, we have:
\[
\chi(X) = \chi(X') + \chi(B) = 2\chi(Y') + \chi(B) = 2(\chi(Y)-\chi(B)) + \chi(B) =  2\chi(Y) - \chi(B).\qedhere
\]
\end{proof}

Moreover, if $p$ is a double branched cover, then the associated cover $p'$ is a double cover, so $\pi_1(X')$ is an index-2 subgroup of $\pi_1(Y')$, and in particular it is normal.

\begin{prop}\label{p:exist-double-cover-H1}
Fix a smooth manifold $Y$ and a smooth codimension-$2$ submanifold $B\subset Y$. There exists a double cover $p \co X \to Y$ branched over $B$ if and only if there exists a homomorphism $H_1(Y\setminus B) \to \Z/2\Z$ mapping the meridian of each component of $B$ to $1$.
\end{prop}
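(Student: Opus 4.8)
The statement is an equivalence, so I would prove the two implications separately, both going through the standard dictionary between (connected) covering spaces of $Y\setminus B$ and subgroups of $\pi_1(Y\setminus B)$, composed with the Hurewicz map to $H_1(Y\setminus B)$. The key point is that a homomorphism $\phi\co H_1(Y\setminus B)\to\Z/2\Z$ is the same data as a homomorphism $\pi_1(Y\setminus B)\to\Z/2\Z$ (since $\Z/2\Z$ is abelian, every such homomorphism factors through abelianisation), hence the same data as an index-at-most-$2$ normal subgroup, hence the same data as a (possibly disconnected) double cover $p'\co X'\to Y\setminus B$; the meridian condition is exactly what controls whether this cover \emph{ramifies}, rather than merely being unbranched, along each component of $B$.

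For the ``only if'' direction: given a branched double cover $p\co X\to Y$, the associated cover $p'\co X'\to Y'=Y\setminus B$ is an honest double cover, so it is classified by a homomorphism $\pi_1(Y')\to\Z/2\Z$, and hence by $\phi\co H_1(Y')\to\Z/2\Z$. I need to check that $\phi$ sends each meridian to $1$. This is a purely local computation: near a point $x\in p^{-1}(B)$ with $n_x>1$ (such a point exists over every component of $B$ by the running assumption, and here $n_x=2$), the model map is $\id\times\mu_2$, and the meridian of $B$ downstairs is a loop $\gamma$ that, in the $\mu_2\co\C\to\C$ factor, is a small circle around $0$; its preimage under $z\mapsto z^2$ is connected (a single circle double-covering $\gamma$), which is precisely the statement that $\gamma$ lifts to a path, not a loop, i.e.\ $\phi([\gamma])=1$. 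If instead $\phi$ vanished on some meridian, the cover would extend as an unramified cover over that component, contradicting the last sentence of the branched-cover definition.

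For the ``if'' direction: given $\phi\co H_1(Y\setminus B)\to\Z/2\Z$ sending every meridian to $1$, compose with Hurewicz to get $\pi_1(Y\setminus B)\to\Z/2\Z$; if this is the zero map then every meridian maps to $0$, contradicting the hypothesis (assuming $B\neq\emptyset$; if $B=\emptyset$ the statement is vacuous/trivial), so the map is surjective and its kernel is an index-$2$ normal subgroup, giving a connected double cover $p'\co X'\to Y\setminus B$. It remains to \emph{fill in} $X'$ to a closed manifold $X$ with a branched cover map to $Y$. I would do this locally: over a tubular neighbourhood $\nu(B)\cong B\times\Delta$ of (a component of) $B$, the restriction of $p'$ to $B\times(\Delta\setminus 0)$ is the double cover determined by sending the $\Delta$-meridian to $1\in\Z/2\Z$; up to isomorphism this is exactly $\id_B\times(\mu_2|_{\C\setminus 0})$, because connected double covers of $B\times(\Delta\setminus 0)$ restricting to the connected one on each $\{b\}\times(\Delta\setminus 0)$ are classified by $\Hom(H_1(B),\Z/2\Z)$ and — here is the one point that needs a small argument — the meridian hypothesis plus the fact that $\phi$ is \emph{already defined on all of} $H_1(Y\setminus B)$ pins down which one we get, but in any case it is a double cover of $B\times(\Delta\setminus0)$ that on the $\Delta$-factor looks like $\mu_2$, so it extends over $B\times\Delta$ by adjoining a copy of $B\times\{0\}$ with the model map $\id\times\mu_2$. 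Gluing these local extensions to $X'$ along $p'^{-1}(\partial\nu(B))$ produces the closed manifold $X$ and the map $p\co X\to Y$, which is a branched cover over $B$ by construction, and the running assumption ($n_x>1$ somewhere over each component) holds because $n_x=2$ everywhere over $B$.

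The main obstacle is the extension step in the ``if'' direction: one has to be a little careful that the abstract double cover $X'\to Y\setminus B$ actually has the product-times-$\mu_2$ form in a neighbourhood of $B$, rather than some twisted form that does not extend — this is where the hypothesis that $\phi$ is defined on $H_1(Y\setminus B)$, not just abstractly on $\pi_1$ of a punctured neighbourhood, is used, and where the codimension-$2$ tubular neighbourhood theorem (so that $\nu(B)\setminus B$ deformation retracts to a circle bundle over $B$) does the heavy lifting. Everything else — the local $\mu_2$ computation of the meridian, the Hurewicz/abelianisation bookkeeping — is routine.
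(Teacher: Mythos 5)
Your proposal is correct and follows essentially the same route as the paper: both directions go through the dictionary between double covers of $Y\setminus B$ and homomorphisms $\pi_1(Y\setminus B)\to\Z/2\Z$ factoring through $H_1$, with the meridian condition verified by the local $\mu_2$ model, and the ``if'' direction completed by filling in the cover over a tubular neighbourhood of each component of $B$ fibrewise. The only cosmetic difference is that you momentarily write the tubular neighbourhood as a product $B\times\Delta$ where the paper works with the disc bundle and the associated circle bundle from the start (the normal bundle need not be trivial), but you correct for this yourself in the final paragraph, so the argument goes through as in the paper.
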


\begin{proof}
We first prove that if there is a double cover $p\co X \to Y$ branched over $B$ then there is a homomorphism $H_1(Y\setminus B) \to \Z/2\Z$ as above.
A degree-$2$ cover corresponds to a cover $p' \co X' \to Y\setminus B$ of degree 2, so to an index-2 subgroup $p'_*(\pi_1(X')) < \pi_1(Y \setminus B)$.
Such a subgroup is automatically normal and has $\pi_1(Y \setminus B)/p'_*(\pi_1(X')) \cong \Z/2\Z$.
That is to say, $p'_*(\pi_1(X'))$ is the kernel of a surjection $\phi \co \pi_1(Y \setminus B) \to \Z/2\Z$.
Since $\Z/2\Z$ is Abelian, $\phi$ factors through the Abelianisation of $\pi_1(Y\setminus B)$, i.e. $H_1(Y \setminus B)$.
In other words, there is a homomorphism $\overline{\phi} \co H_1(Y \setminus B) \to \Z/2\Z$ such that the following diagram commutes:
\[
\xymatrix{
\pi_1(Y \setminus B) \ar[r]_{\phi} \ar[d]		& \Z/2\Z	\\
H_1(Y\setminus B) \ar[ur]_{\overline{\phi}_p}	&\\
}
\]
where the vertical map is the natural homomorphism $\pi_1 \to H_1$ (which is essentially the Abelianisation map).
Since the lift of a meridian of $B$ is \emph{half} a meridian of $p\inv(B)$, the associated map sends the meridian to $1 \in \Z/2\Z$.

For the other direction, suppose that we have a homomorphism $\phi\co H_1(Y\setminus B) \to \Z/2\Z$ that sends each meridian of $B$ to $1$.
Then there is an index-2 subgroup of $\pi_1(Y\setminus B)$, namely the kernel of the composition $\pi_1(Y\setminus B) \to H_1(Y\setminus B) \to \Z/2\Z$, which in turn corresponds to a double cover $p'\co X' \to Y\setminus B$.
Consider a small tubular neighbourhood $N_i$ of a component $B_i$ of $B$.
$N_i$ is a disc bundle over $B_i$, and $Y_i := (p')\inv(\del N_i)$ is a circle bundle over $B_i$.
The restriction of $p'$ to $Y_i$ is a bundle map $Y' \to\del N_i$, which by the assumption on $\phi$ double covers each circle fibre of $\del N_i$.
We can therefore extend $p'|_{Y_i}$ to a double cover $\tilde N_i \to N_i$ from the disc bundle $\tilde N_i$ associated to $Y_i$, which branches on the 0-section of $\tilde N_i$ over $B_i$.
If we do this for each component of $B$, we compactify $p'$ to a double cover $p \co X \to Y$ which is branched precisely over $B$.
\end{proof}

We are now ready for the first topological proof, which is adapted from work of Ruberman and Starkston~\cite{RubermanStarkston}\footnote{In their proof, Ruberman and Starkston use the G-signature theorem to conclude, while we bypass it at the very end. Their argument generalises to other configurations, notably to finite projective planes.}.

\begin{proof}[Second proof of Theorem~\ref{t:Fano}]
Suppose that the Fano configuration is realised.
Blow up $\CP^2$ at the seven intersection points of the configuration, and consider the proper transforms of the seven lines. Each has been blown up three times, we have a collection of pairwise disjoint embedded spheres of self-intersection $-2$ in $Y = \CP^2 \#7\CPbar$. Fix generators $h, e_1, \dots, e_7$ for $H_2(Y)$, corresponding to the $+1$-sphere that generates $H_2(\CP^2)$ and to the seven $(-1)$-spheres that generate $H_2(7\CPbar)$.

More precisely, they are in the homology classes
\[
\begin{array}{llll}
h-e_1-e_2-e_3, & h-e_1-e_4-e_5, & h-e_2-e_4-e_7, & h-e_3-e_5-e_7,\\
h-e_1-e_6-e_7, & h-e_2-e_5-e_6, & h-e_3-e_4-e_6.
\end{array}
\]
Call $F_1, \dots, F_4$ the spheres in the homology classes of the top row and $F_5$, $F_6$, and $F_7$ the ones from the bottom row. This is represented in Figure~\ref{f:Fano-with-labels}. Note that $[F_1] + \dots + [F_4] \in H_2(Y)$ is a class that is divisible by $2$. So we can take the double cover of $Y$ branched over $B = F_1 \cup \dots \cup F_4$, $p \co X \to Y$.

\begin{figure}
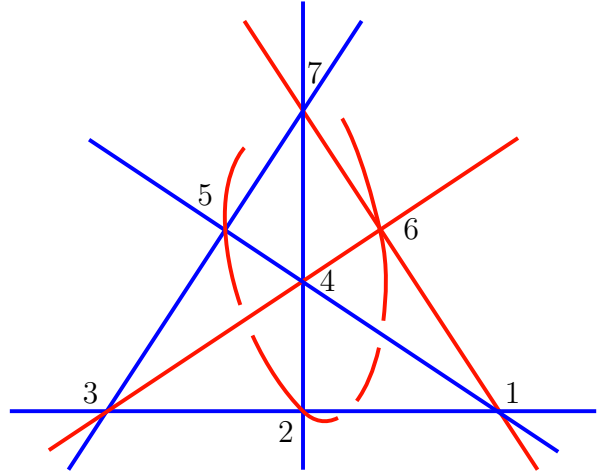

\centering
\labellist
\pinlabel $1$ at 193 30
\pinlabel $2$ at 106 15
\pinlabel $3$ at 31 30
\pinlabel $4$ at 122 73
\pinlabel $5$ at 75 106
\pinlabel $6$ at 154 93
\pinlabel $7$ at 117 153
\endlabellist
\mfig{fano_coloured}
\caption{We labelled the points in the configurations by $1,\dots,7$; the blue lines represent $F_1, \dots, F_4$, and the red lines $F_5$, $F_6$, and $F_7$}\label{f:Fano-with-labels}
\end{figure}

In $X$ we have the lifts $\tilde F_1, \dots, \tilde F_4$ of $F_1, \dots, F_4$, each of which is a sphere of self-intersection $-1$ (why? Exercise!). Each of $F_5$, $F_6$, $F_7$ is a \emph{sphere} that is disjoint from the branching set, so it lifts to two disjoint spheres $\tilde F_5^\ell$, $\tilde F_6^\ell$, and $\tilde F_7^\ell$ ($\ell = 1,2$), each of self-intersection equal to $-2$. This is because a map to a sphere with a basepoint has one lift for each preimage of the basepoint; the fact that the two preimages are disjoint follows from the fact that each of $F_5$, $F_6$, $F_7$ is embedded in $Y$ and $p$ is  a cover away from $B$ (so a local homeomorphism). The situation is schematically represented in Figure~\ref{f:Fano-double-cover}.

\begin{figure}
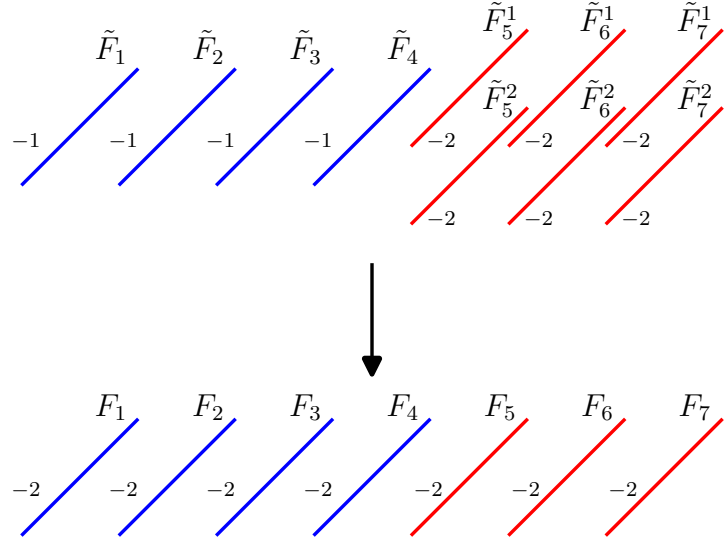

\labellist
\pinlabel $\tilde{F}_1$ at 70 378
\pinlabel $F_1$ at 70 100
\pinlabel $\vphantom{}_{-1}$ at 5 305
\pinlabel $\vphantom{}_{-2}$ at 5 36

\pinlabel $\tilde{F}_2$ at 145 378
\pinlabel $F_2$ at 145 100
\pinlabel $\vphantom{}_{-1}$ at 80 305
\pinlabel $\vphantom{}_{-2}$ at 80 36

\pinlabel $\tilde{F}_3$ at 220 378
\pinlabel $F_3$ at 220 100
\pinlabel $\vphantom{}_{-1}$ at 155 305
\pinlabel $\vphantom{}_{-2}$ at 155 36

\pinlabel $\tilde{F}_4$ at 295 378
\pinlabel $F_4$ at 295 100
\pinlabel $\vphantom{}_{-1}$ at 230 305
\pinlabel $\vphantom{}_{-2}$ at 230 36

\pinlabel $\tilde{F}^1_5$ at 370 398
\pinlabel $\tilde{F}^2_5$ at 370 338
\pinlabel $F_5$ at 370 100
\pinlabel $\vphantom{}_{-2}$ at 325 305
\pinlabel $\vphantom{}_{-2}$ at 325 245
\pinlabel $\vphantom{}_{-2}$ at 315 36

\pinlabel $\tilde{F}^1_6$ at 445 398
\pinlabel $\tilde{F}^2_6$ at 445 338
\pinlabel $F_6$ at 445 100
\pinlabel $\vphantom{}_{-2}$ at 400 305
\pinlabel $\vphantom{}_{-2}$ at 400 245
\pinlabel $\vphantom{}_{-2}$ at 390 36

\pinlabel $\tilde{F}^1_7$ at 520 398
\pinlabel $\tilde{F}^2_7$ at 520 338
\pinlabel $F_7$ at 520 100
\pinlabel $\vphantom{}_{-2}$ at 475 305
\pinlabel $\vphantom{}_{-2}$ at 475 245
\pinlabel $\vphantom{}_{-2}$ at 465 36
\endlabellist
\centering
\mlfig{fano-double-cover}
\caption{The double cover of $\CP^2 \# 7\CPbar$ branched over $F_1 \cup \dots \cup F_4$, and the configuration of spheres coming from $F_1, \dots, F_7$ therein.}\label{f:Fano-double-cover}
\end{figure}

Summarising, we have ten classes, the $[\tilde F_j]$ (for $j = 1,\dots, 4$) and the pairs $[\tilde F_k^\ell]$ (for $k = 5,6,7$, $\ell = 1,2$), that are pairwise orthogonal (since they are disjoint) and negative. That is,
\begin{equation}\label{e:too-much-negativity}
b_2^-(X) \ge 10.
\end{equation}

On the other hand, $H_1(Y \setminus B) = \Z/2\Z$: this follows in the usual way. To wit, let $N$ be an open tubular neighbourhood of $B$.
From homotopy invariance, Poincar\'e--Lefschetz duality, and excision we deduce:
\[
H_1(Y \setminus B) \cong H_1(Y \setminus N) \cong H^3(Y \setminus N, \del N) \cong H^3(Y,B),
\]
From the long exact sequence of the pair $(Y,B)$ we have
\[
H^2(Y) \to H^2(B) \to H^3(Y,B) \to H^3(Y) = 0,
\]
and since $H_*(B)$ and $H_*(Y)$ are torsion-free, the map $H^2(Y) \to H^2(B)$ is the transpose of the map $H_2(B) \to H_2(Y)$ induced by the inclusion.
We have a basis for $H_2(B)$ given by the fundamental classes of the four components, and the basis $\{h,e_1,\dots,e_7\}$ for $H_2(Y)$ given above.
In these bases, $H_2(B) \to H_2(Y)$ is represented by the matrix:
\[
\left(
\begin{array}{rrrrrrrr}
1 & -1 & -1 & -1 & 0 & 0 & 0 & 0\\
1 & -1 & 0 & 0 & -1 & -1 & 0 & 0\\
1 & 0 & -1 & 0 & -1 & 0 & 0 & -1\\
1 & 0 & 0 & -1 & 0 & -1 & 0 & -1\\
\end{array}
\right),
\]
so its transpose represents the map $H^2(Y) \to H^2(B)$.
We can now check that this latter map has cokernel isomorphic to $\Z/2\Z$ (in fact, the coset containing of the fundamental class of any of the components of $B$ generates it).

Now recall that the Goldschmidt lemma~{\cite[Lemma~4.1 and following corollary]{HsiangSzczarba}} asserts that if $G$ is a group with finite cyclic Abelianisation and if $H$ is a subgroup of $G$ of prime power index, such that the quotient $G/H$ is still Abelian, then $H$ has finite Abelianisation. As a corollary, if a manifold $M$ has finite cyclic first homology group and $\tilde M$ is a cyclic cover of prime power order, then also $H_1(\tilde M)$ is finite\footnote{This is similar to the better-known fact that the double cover of $S^3$ branched over a knot is an $\F_2$-homology sphere, and that more generally the $p^r$-fold cyclic cover of $S^3$ branched over a knot is an $\F_p$-homology sphere (and in particular a rational homology sphere) for every prime power $p^r$.}. We apply this to the complement $Y'$ and its cover $X'$, and we obtain that $H_1(X')$ is finite. Since $H_1(X)$ is a quotient of $H_1(X')$ (why?), then it is also finite, and therefore $b_1(X) = 0$ and $\chi(X) = 2+b_2(X)$. By Proposition~\ref{p:chi}, $\chi(X) = 2\chi(Y) - \chi(B) = 20-8 = 12$, from which $b_2(X) = 10$. Since $b_2^-(X) \ge 10$, we should have $b_2^+(X) = 0$.

However, the preimage of a generic line (that is transverse to the branching set) is a connected surface of self-intersection $+2$ (why?), and in particular $b_2^+(X) > 0$, which gives a contradiction. 
\end{proof}

The second topological proof is a special case of a more general result, proved in~\cite{triplepoints}.

\begin{proof}[Sketch of a third proof of Theorem~\ref{t:Fano}]
As in the previous proof, suppose that such a configuration exists, and blow up $\CP^2$ at its seven singular points. The proper transform of the configuration comprises seven smoothly embedded spheres in $Y$, each of self-intersection $-2$. Tube the spheres together, to obtain an embedded sphere, which lives in the homology class $7h-3(e_1+ \dots + e_7)$.

Since this class $T$ has all odd coefficients in the base $h, e_1, \dots, e_7$, it is a \emph{characteristic} class (i.e. $T\cdot A \equiv A\cdot A \pmod 2$ for every $A \in H_2(Y)$). However, for every characteristic class $T$ in $Y$ represented by a smoothly embedded 2-sphere, $T\cdot T \equiv \sigma(Y) \pmod{16}$~\cite{KervaireMilnor}, but in our case
\[
-14 = T\cdot T \not\equiv \sigma(Y) = -6 \pmod{16},
\]
which gives a contradiction.
\end{proof}

The advantage of the last two proofs is that they work in a much more flexible category than the complex one: the proofs above work in the smooth category (i.e. when each ``line'' is a smoothly embedded 2-sphere and every two ``lines'' intersect transversely once), and they work for more general 4-manifolds than $\CP^2$: in fact, any 4-manifold with the same $\F_2$-homology of $\CP^2$ works. Even more is true: both proofs hold in the locally-flat category: the first one is easier to adapt, the second one is due to work of Lee and Wilczynski~\cite{LeeWilczynski}.

\section{Singularities and branched covers: back to dimension 2}\label{s:RiemannHurwitz}

In this section we want to go back to the branched covers we have used in the first and second proof of the adjunction formula, in Section~\ref{s:adjunction} to restrict the collection of singularities a given curve can have. Instead of giving a precise formulation (which can get a bit cumbersome), we will do two (rather similar) examples.

\begin{prop}
There exists no curve $C$ of degree $5$ with six singularities whose link is a torus knot $T(2,3)$.
\end{prop}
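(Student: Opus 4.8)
The plan is to combine the singular adjunction formula (Theorem~\ref{t:singularadjunction}) with the Hurwitz formula applied to a well-chosen projection. First I would reduce to a rigid situation. Every cusp is a one-branch singularity, hence lies on a single irreducible component and is a cusp of that component, and a $T(2,3)$ cusp contributes $\mu + \beta - 1 = 2+1-1 = 2$ to the right-hand side of~\eqref{e:singadj}. Applying~\eqref{e:singadj} to each irreducible component $C_j$ of degree $e_j$, and using $\chi_g(C_j) = 2-2g_j \le 2$ together with the fact that every singular point contributes a non-negative amount, one finds that an irreducible curve of degree $e$ carries at most $\tfrac12(e-1)(e-2)$ cusps of type $T(2,3)$. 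A short convexity computation then shows $\sum_j \tfrac12(e_j-1)(e_j-2) \le \tfrac12\cdot 4 \cdot 3 = 6$ whenever $\sum_j e_j = 5$, with equality only for a single component. Hence a degree-$5$ curve with six $T(2,3)$ cusps must be irreducible, rational, and have no further singularities. So from now on $C$ is an irreducible rational quintic with exactly six cusps $p_1, \dots, p_6$; let $u\co\Sigma \to \CP^2$ be its normalisation with $\Sigma \cong \CP^1$ and $x_i = u\inv(p_i)$ (a single point, since each cusp has one branch). Because $C$ has a $T(2,3)$ cusp at each $p_i$, after an analytic change of coordinates $u$ is locally $t \mapsto (t^2, t^3)$ near $x_i$; in particular $u'(x_i) = 0$ while $u''(x_i) \neq 0$.

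Next I would project from one of the cusps. Let $\pi_{p_1}\co\CP^2 \setminus \{p_1\} \to \CP^1$ be the linear projection from $p_1$ and set $f := \pi_{p_1}\circ u\co\Sigma \to \CP^1$; since $\Sigma$ is a smooth projective curve, $f$ extends to a morphism, i.e. a branched cover. Its degree is $5 - \mathrm{mult}_{p_1}(C) = 5 - 2 = 3$: a generic line through $p_1$ meets $C$ at $p_1$ with multiplicity $2$ (the multiplicity of a cusp) and transversely at $5-2 = 3$ further points, and these $3$ points are exactly the generic fibre of $f$. The key observation about ramification is that $\pi_{p_1}$ is a holomorphic submersion near each $p_i$ with $i \ge 2$ (as $p_i \neq p_1$), so $f'(x_i) = d\pi_{p_1}|_{p_i}\bigl(u'(x_i)\bigr) = 0$ and therefore $e_f(x_i) \ge 2$ for every $i = 2, \dots, 6$. (In fact $e_f(x_i)$ equals $2$ or $3$ depending on whether $p_1$ lies off or on the cuspidal tangent line at $p_i$, and one checks $e_f(x_1) = 1$, but only the inequality $e_f(x_i) \ge 2$ for $i\ge 2$ is needed.)

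Finally, the Hurwitz formula gives $2 = \chi(\Sigma) = 3\cdot\chi(\CP^1) - \sum_x (e_f(x) - 1) = 6 - \sum_x (e_f(x)-1)$, so $\sum_x (e_f(x)-1) = 4$. On the other hand the five points $x_2, \dots, x_6$ are distinct and each contributes at least $1$ to this sum, whence $\sum_x(e_f(x)-1) \ge 5$ --- a contradiction. I expect the only delicate points to be the bookkeeping in the reduction step and the identity $\deg(\pi_p \circ u) = d - \mathrm{mult}_p(C)$; in particular, the crucial choice is to project \emph{from a cusp}, since projecting from a smooth point of $C$ (degree $4$, ramification budget $6$) or from a generic point of $\CP^2$ (degree $5$, budget $8$) leaves just enough room for the cusps and yields no contradiction.
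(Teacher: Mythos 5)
Your proposal is correct and follows essentially the same route as the paper: reduce via the singular adjunction formula to an irreducible rational cuspidal quintic, project from one of the cusps to obtain a degree-$3$ branched cover $\CP^1 \to \CP^1$, and derive a contradiction from the Hurwitz formula using the ramification forced at the other five cusps. Your justification of that ramification via the local parametrisation $t \mapsto (t^2,t^3)$ and the chain rule is a clean substitute for the paper's phrasing in terms of lines through $q$ and a second singular point, but it is the same argument.
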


\begin{proof}
Suppose that such a curve existed.

The first observation we want to make is that $C$ is irreducible. As we have seen in Exercise~\ref{exe:torusknots}, singularities of type $(2,3)$ have $\beta = 1$ and $\mu = 2$. Since $\beta = 1$, each such singularity is contained in an irreducible component $C_i$ of $C$.
By the singular adjunction formula~\eqref{e:singadj}, an irreducible curve of degree $d$ has at most $\frac12(d-1)(d-2)$ singularities of type $(2,3)$. For $d = 1,2$ there cannot be any such singularity, for $d=3$ there can be at most 1, and for $d=4$ at most 3. For no non-trivial partition of $5$ can we ever reach $6$ singularities, so $C$ is irreducible.

Secondly, again by~\eqref{e:singadj}, we obtain that $C$ has no other singularities and that $\chi_g(C) = 2$, so $C$ is a rational cuspidal curve, and in particular there is a one-to-one holomorphic map $u\co \CP^1 \to \CP^2$ whose image is $C$.

Now pick one of the singularities $q \in C$ and an auxiliary line $L \subset \CP^2$, not containing $q$. As we have done in Section~\ref{s:adjunction}, project $\CP^2 \setminus \{q\}$ onto $L$, and call this projection $\pi$. Look at the composition $\pi \circ u$, which is only defined away from $u\inv(q)$. We claim (but not prove) that this function actually extends to a holomorphic function $p$ on all of $\CP^1$, so we have a branched cover $p\co \CP^1 \to \CP^1$.

We want to compute the degree of $p$: a generic line through $q$ meets $C$ at three other points, because it meets the singularity with multiplicity 2 at $q$ (convince yourself of this!). So the map $p$ has degree 3. (What happens for a non-generic line? How many of those are there?)

Whenever the line $\ell$ though $q$ passes through another singular point of $C$, there is one preimage $x\in p\inv(\ell \cap L)$ for which $e_p(x) \ge 2$.
Note that, since $p$ has degree 3, there can be at most one singularity of $C$ (other than that at $q$) on any such $\ell$.
So we have five lines, each passing through $q$ and another singular point of $C$, and five points $x_1, \dots, x_5$ in $\CP^1$ such that $e_p(x_k) > 1$. We can now use the Hurwitz formula for $p$:
\[
2 = \chi(\CP^1) = d\chi(\CP^1) - \sum_{x\in \CP^1} (e_p(x)-1) \le 3\cdot 2 - \sum_{k=1}^5 (e_p(x_k)-1) \le 1,
\]
which gives a contradiction.
\end{proof}

\begin{prop}
There exists no curve $C$ of degree $5$ with four singularities whose link is a torus knot $T(2,3)$ and one whose link is the torus knot $T(2,5)$.
\end{prop}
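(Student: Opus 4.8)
The plan is to mimic the previous proposition very closely, adjusting the numerology for the new configuration. First I would verify irreducibility: by the singular adjunction formula~\eqref{e:singadj}, an irreducible curve of degree $d$ satisfies $\chi_g \le 2$, so $\sum_p(\mu_{(C,p)}+\beta_{(C,p)}-1) \le 3d-d^2+2d = 3d-d^2+2d$; since a $T(2,3)$ cusp contributes $\mu+\beta-1 = 2$ and a $T(2,5)$ cusp contributes $\mu+\beta-1 = (2-1)(5-1) = 4$, the total contribution we need is $4\cdot 2 + 4 = 12$. Checking each partition of $5$: degrees $1,2$ admit no cusps, degree $3$ admits total contribution at most $\le 2$ (one $T(2,3)$), degree $4$ at most $\le 6$. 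No sum of these over a partition of $5$ reaches $12$, so $C$ is irreducible. Then, again by~\eqref{e:singadj}, equality forces $\chi_g(C) = 2$ and no further singularities, so $C$ is a rational cuspidal curve and there is a one-to-one holomorphic parametrisation $u\co\CP^1\to\CP^2$ with image $C$.

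Next I would set up the branched cover exactly as before. Choose the $T(2,5)$ singularity to be the point $q$ we project from (this is the key choice — projecting from the worst singularity maximises its multiplicity and hence lowers the degree of the cover), pick an auxiliary line $L\not\ni q$, and let $p\co\CP^1\to\CP^1$ be the holomorphic extension of $\pi\circ u$, where $\pi$ is projection from $q$ to $L$. The degree of $p$ equals the number of intersections of a generic line through $q$ with $C$ away from $q$: a generic line meets $C$ with total multiplicity $5$ and meets the $T(2,5)$ singularity with multiplicity $2$, so it meets $C$ at $3$ further points; hence $\deg p = 3$.

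Then I would count ramification. Each of the four $T(2,3)$ singularities of $C$ lies on a line through $q$, and along such a line there is a preimage $x$ with $e_p(x)\ge 2$; since $\deg p = 3$, a $T(2,3)$ cusp (which a generic-through-$q$ line meets with multiplicity $2$) gives $e_p(x) = 2$ at that preimage, and distinct cusps give distinct such lines and distinct points $x_1,\dots,x_4 \in \CP^1$ with $e_p(x_k)\ge 2$. Additionally, the point $q$ itself produces ramification: the tangent cone of the $T(2,5)$ singularity, or more precisely the branch of $C$ at $q$, contributes a point $x_0\in\CP^1$ (namely $u\inv(q)$) at which $p$ is ramified of index equal to the local intersection multiplicity with a generic line, i.e. $e_p(x_0) = 2$ — this is the crucial extra term that the $T(2,3)$-only proposition did not have in the same form. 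So Hurwitz gives
\[
2 = \chi(\CP^1) = 3\cdot 2 - \sum_{x}(e_p(x)-1) \le 6 - \sum_{k=0}^{4}(e_p(x_k)-1) \le 6 - 5 = 1,
\]
a contradiction.

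The main obstacle I anticipate is the bookkeeping around the point $q$ and the singular (non-generic) lines through it: one must confirm that $\pi\circ u$ really extends holomorphically across $u\inv(q)$ with the stated ramification index (two local branches of $T(2,5)$? no — $T(2,5)$ is a knot, so one branch, and the local intersection with a generic line is $2$, giving $e_p(x_0)=2$), and one must make sure none of the five ramification points $x_0,\dots,x_4$ coincide and that the inequality $\sum(e_p(x_k)-1)\ge 5$ is not spoiled by lines through $q$ that are non-generic for reasons other than passing through a cusp (e.g. the tangent line to $C$ at $q$). I would handle this either by choosing $L$ and the labelling carefully, or — as the previous proof does implicitly — by noting that any additional ramification only strengthens the inequality, so the bound $\sum_x(e_p(x)-1)\ge 5$ holds regardless, which already contradicts the Hurwitz identity that forces $\sum_x(e_p(x)-1) = 4$.
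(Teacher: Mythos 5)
Your proof follows the paper's argument step for step: irreducibility and rationality via the singular adjunction formula, projection from the $T(2,5)$ cusp to get a degree-$3$ map $p\co\CP^1\to\CP^1$, four ramification points coming from the $T(2,3)$ cusps plus one extra ramification point at $x_0=u\inv(q)$, and the Hurwitz contradiction $2\le 1$. (Minor slip: the bound on $\sum_p(\mu+\beta-1)$ for an irreducible curve of degree $d$ is $(d-1)(d-2)=d^2-3d+2$, not $3d-d^2+2d$; the values you actually use for $d=3,4$ are the correct ones.)

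The one step whose justification is genuinely wrong as written is the ramification at $x_0$. You assert that $e_p(x_0)$ equals the local intersection multiplicity of the branch with a generic line through $q$, i.e.\ the multiplicity $2$ of the singularity. That principle is false: applied to the previous proposition it would give $e_p(x_0)=2$ when projecting from a $T(2,3)$ cusp as well, whereas there $e_p(x_0)=1$ (no ramification at $x_0$), which is exactly why that proof has no such term. The correct statement is that $e_p(x_0)$ equals the contact order of the tangent line at $q$ minus the multiplicity at $q$: writing the branch as $\bigl(t^2,\ c_4t^4+c_5t^5+\cdots\bigr)$ with tangent line $y=0$ (the coefficient of $t^3$ vanishes precisely because the cusp is $T(2,5)$ and not $T(2,3)$), the projection is $t\mapsto y(t)/x(t)$, which vanishes to order at least $4-2=2$. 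Equivalently, as the paper phrases it, the tangent line meets $C$ at $q$ with multiplicity at least $4$, hence in at most one further point, so the fibre of the degree-$3$ map $p$ over that line has at most two points and must carry ramification at $x_0$. Since only $e_p(x_0)\ge 2$ is needed, your Hurwitz count and the final contradiction survive once this justification is repaired.
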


\begin{proof}[Sketch of proof]
Like in the previous proof, the curve $C$ is easily seen to be rational and cuspidal. This time, we project from the singularity $q_0 \in C$ of type $(2,5)$. As above, the projection induces a holomorphic map $p\co \CP^1 \to \CP^1$ of degree $3$.

The main difference with respect to the previous example is that this holomorphic map now has non-trivial ramification at the point from which we project. This is because the tangent curve to the cusp can intersect $C$ in at most one other point (this requires a small argument, which we will not give). In particular, the ramification index of the projection at the preimage $x_0 \in \CP^1$ of $q_0$ is at least 2. As above, each of the other four singularities of $C$ correspond to four ramification points $x_1, \dots, x_4$ of the projection.

Using the Hurwitz formula as in the previous proof, we obtain:
\[
2 = \chi(\CP^1) = d\chi(\CP^1) - \sum_{x\in \CP^1} (e_p(x)-1) \le 3\cdot 2 - \sum_{k=0}^4 (e_p(x_k)-1) \le 1,
\]
which again gives a contradiction.
\end{proof}

{\footnotesize
\bibliography{winterbraids}
\bibliographystyle{amsalpha}
}
\end{document}